\DeclareFontFamily{OMS}{rsfs}{\skewchar\font'60}
\DeclareFontShape{OMS}{rsfs}{m}{n}{<-5>rsfs5 <5-7>rsfs7 <7->rsfs10 }{}
\DeclareSymbolFont{rsfs}{OMS}{rsfs}{m}{n}
\DeclareSymbolFontAlphabet{\scr}{rsfs}
\newtheorem{theorem}{Theorem}[section]
\newtheorem{proposition}[theorem]{Proposition}
\newtheorem{corollary}[theorem]{Corollary}
\theoremstyle{definition}
\newtheorem{definition}[theorem]{Definition}
\theoremstyle{remark}
\newtheorem{remark}[theorem]{Remark}
\newtheorem{question}[theorem]{Question}
\newtheorem{problem}[theorem]{Problem}
\numberwithin{equation}{theorem}
\newcommand{\bG}{\mathbb{G}}
\newcommand{\bZ}{\mathbb{Z}}
\DeclareMathOperator{\Spec}{{Spec}}
\DeclareMathOperator{\Supp}{{Supp}}
\DeclareMathOperator{\Hom}{Hom}
\DeclareMathOperator{\Ext}{Ext}
\DeclareMathOperator{\mdim}{{mdim}}
\DeclareMathOperator{\embdim}{{embdim}}
\DeclareMathOperator{\hgt}{{ht}}
\DeclareMathOperator{\lcd}{{lcd}}
\DeclareMathOperator{\Fdepth}{{{\it F}-depth}}
\newcommand{\bm}{\mathfrak{m}}
\newcommand{\bn}{\mathfrak{n}}
\newcommand{\ba}{\mathfrak{a}}
\newcommand{\bb}{\mathfrak{b}}
\newcommand{\fp}{\mathfrak{p}}
\newcommand{\fq}{\mathfrak{q}}
\newcommand{\bp}{\mathfrak{p}}
\newcommand{\bq}{\mathfrak{q}}
 \title{The Second Vanishing Theorem for Local Cohomology Modules}
\author{Wenliang Zhang}\address{Department of Mathematics, Statistics, and Computer Science, University of Illinois at Chicago, Chicago, IL 60607}
\email{wlzhang@uic.edu}
\thanks{The author is partially supported by NSF through DMS-1752081.}
\subjclass[2020]{13D45, 14B15}
\begin{document}
\maketitle

\begin{abstract}
We prove the Second Vanishing Theorem for local cohomology modules of an unramified noetherian regular local ring in its full generality. As an application of our vanishing theorem for unramified regular local rings, we extend our topological characterization of the highest Lyubeznik number of an equal-characteristic local ring to the setting of mixed characteristic. Some observations and open questions are also presented along the way.
\end{abstract}

\section{Introduction}

The study of vanishing of local cohomology modules has a long and rich history. In \cite[p.~79]{HartshorneLocalCohomology}, Grothendieck stated the following problem.

\begin{problem}[Grothendieck]
\label{Grothendieck problem}
Let $R$ be a commutative noetherian local ring, $\ba$ be an ideal of $R$, and $n$ be an integer. Find conditions under which $H^i_{\ba}(M)=0$ for all $i>n$ and all $R$-modules $M$.
\end{problem}

Grothendieck proved that $H^i_{\ba}(M)=0$ for all $i>\dim(R)$ and all $R$-modules $M$ (\cite{HartshorneLocalCohomology}), which solved Problem \ref{Grothendieck problem} for $n=\dim(R)$. A solution to Problem \ref{Grothendieck problem} in the case when $n=\dim(R)-1$ was found in \cite[Theorem 3.1]{HartshorneCohomologicalDimension} and has been referred as the Hartshorne-Lichtenbaum Vanishing Theorem. To explain a solution to Problem \ref{Grothendieck problem} in the case when $n=\dim(R)-2$, we consider the following definition:

\begin{definition}
Let $(R,\bm)$ be a $d$-dimensional noetherian local ring and let $\widetilde{R}$ denote the completion of the strict henselization of the completion of $R$. We say that {\it the Second Vanishing Theorem holds for $R$} if, for each ideal $\ba$ in $R$, the following conditions are equivalent:
\begin{enumerate}
\item $H^j_{\ba}(M)=0$ for all $j>d-2$ and all $R$-modules $M$;
\item $\dim(R/\ba)\geq 2$ and the punctured spectrum of $\widetilde{R}/\ba\widetilde{R}$ is connected.
\end{enumerate}
\end{definition}

If $R$ is not regular, then the Second Vanishing Theorem may not hold for $R$, {\it cf.} \cite[7.7]{HochsterZhangContent}. When $R$ is regular, some positive results are known. When $R$ is a polynomial ring over a field and $\ba$ is a homogeneous ideal, then the Second Vanishing Theorem holds, as proved by Hartshorne (\cite[7.5]{HartshorneCohomologicalDimension}) who also coined the name `Second Vanishing Theorem' and proposed the following problem in \cite[p.~445]{HartshorneCohomologicalDimension}:
\begin{problem}[Hartshorne]
\label{Hartshorne Problem}
Prove that the Second Vanishing Theorem holds for all regular local rings.
\end{problem}

It is clear that a solution to Problem \ref{Hartshorne Problem} produces a solution to Grothendieck's original Problem \ref{Grothendieck problem} for noetherian regular local rings when $n=\dim(R)-2$. Subsequently, Ogus (\cite[Corollary~2.11]{OgusLocalCohomologicalDimension}) proved that the Second Vanishing Theorem holds for regular local rings of equal-characteristic 0 and Peskine-Szpiro showed in \cite[III~5.5]{PeskineSzpiroDimensionProjective} that the Second Vanishing Theorem holds for regular local rings of equal-characteristic $p$. \cite{HunekeLyubeznikVanishing} provided a unified proof that the Second Vanishing Theorem holds for regular local rings of equal-characteristic. Extending the Second Vanishing Theorem to rings that do not contain a field has been a major open problem in the study of local cohomology. In \S\ref{section: vanishing unramified}, we resolve Problem \ref{Hartshorne Problem} for unramified regular local rings of mixed characteristic as follows: 

\begin{theorem}
\label{vanishing unramified}
If $R$ is an unramified noetherian regular local ring of mixed characteristic, then the Second Vanishing Theorem holds for $R$. 
\end{theorem}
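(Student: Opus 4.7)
The plan is to prove the two directions separately, the substantive content being in (2) $\Rightarrow$ (1), which will bootstrap from the equi-characteristic Second Vanishing Theorem via a Bockstein argument on the uniformizer $p$.

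For (1) $\Rightarrow$ (2), faithfully flat descent along $R \to \widetilde{R}$ preserves local cohomology and both conditions of (2). The dimension bound $\dim(\widetilde{R}/\ba\widetilde{R}) \ge 2$ is forced by Hartshorne-Lichtenbaum, since if $\dim R/\ba \le 1$ then $H^{d-1}_\ba(R)$ or $H^d_\ba(R)$ would be nonzero. A disconnection of the punctured spectrum of $\widetilde{R}/\ba\widetilde{R}$ gives, up to radical, a decomposition $\ba\widetilde{R} = \ba_1 \cap \ba_2$ with $\sqrt{\ba_1 + \ba_2}$ equal to the maximal ideal, and the Mayer-Vietoris sequence for $(\ba_1, \ba_2)$ then produces a nonzero class in $H^{d-1}_{\ba\widetilde{R}}(\widetilde{R})$.

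For (2) $\Rightarrow$ (1), I may assume $R = \widetilde{R}$, so by Cohen's structure theorem $R = V[[x_1, \ldots, x_n]]$ where $V$ is a complete discrete valuation ring of mixed characteristic $(0,p)$ with separably closed residue field $k$ and uniformizer $p$; write $d = n+1$. Using the characterization of local cohomological dimension as $\max\{i : H^i_\ba(R) \ne 0\}$ (available for regular local rings of mixed characteristic) together with Hartshorne-Lichtenbaum, the task reduces to showing $H^{d-1}_\ba(R) = 0$. The central tool is the Bockstein exact sequence from $0 \to R \xrightarrow{p} R \to R/pR \to 0$:
\[
H^{d-2}_\ba(R/pR) \to H^{d-1}_\ba(R) \xrightarrow{p} H^{d-1}_\ba(R) \to H^{d-1}_\ba(R/pR).
\]
If $p \in \ba$ (and one may reduce the case $p \in \sqrt{\ba}$ to this, since local cohomology only sees the radical), then $R/pR \cong k[[x_1, \ldots, x_n]]$ is an equi-characteristic $p$ regular local ring of dimension $d-1$, the quotient $(R/pR)/\ba(R/pR) = R/\ba$ is unchanged, and so conditions (2) transfer to the ideal $\ba(R/pR)$. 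The equi-characteristic Second Vanishing Theorem of Peskine-Szpiro then gives $H^{d-2}_\ba(R/pR) = H^{d-1}_\ba(R/pR) = 0$, forcing $p$ to act injectively on $H^{d-1}_\ba(R)$. Since $H^{d-1}_\ba(R)$ is $\ba$-torsion and $p \in \ba$, multiplication by $p$ is also locally nilpotent, so $H^{d-1}_\ba(R) = 0$.

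The main obstacle is the case $p \notin \sqrt{\ba}$. The plan is to reduce to the previous case using Mayer-Vietoris for the pair $(\ba, pR)$; since $H^i_{(p)}(R) = 0$ for $i \ne 1$, this sequence ties $H^{d-1}_\ba(R)$ to $H^{d-1}_{\ba + pR}(R)$ and $H^{d-1}_{\ba \cap pR}(R)$, where $\ba + pR$ now contains $p$ and is amenable to the previous argument whenever $\dim R/(\ba+pR) \ge 2$ and its punctured spectrum is connected. When $\dim R/\ba \ge 3$, these hypotheses should transfer from those for $\ba$ via a topological connectedness argument on the hyperplane section $V(p) \cap V(\ba)$, and an induction on $\dim R/\ba$ closes the case. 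The hardest step is the borderline $\dim R/\ba = 2$: then $\dim R/(\ba+pR) = 1$, condition (2) genuinely fails for $\ba+pR$, and a direct topological or formal-cohomological analysis of the two-dimensional locus $V(\ba)$ — exploiting connectedness of the one-dimensional punctured spectrum of $R/\ba$ to rule out nontrivial classes in $H^{d-1}_\ba(R)$ — appears necessary.
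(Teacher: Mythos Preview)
Your direction (1)$\Rightarrow$(2) matches the paper's argument. Your treatment of (2)$\Rightarrow$(1) in the case $p \in \sqrt{\ba}$ is correct and clean --- in fact this Bockstein reduction to the equi-characteristic Second Vanishing Theorem reappears later in the paper in a closely related form (Theorem~\ref{unramified extension}).

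The genuine gap is the case $p \notin \sqrt{\ba}$, which you explicitly leave open. Your Mayer--Vietoris plan does not close it: controlling $H^{d-1}_\ba(R)$ via the pair $(\ba, pR)$ requires information about $H^{d-1}_{\ba \cap pR}(R)$ as well as $H^{d-1}_{\ba+pR}(R)$, and there is no reason for the former to vanish (one has $\dim R/(\ba \cap pR) = d-1$, so Hartshorne--Lichtenbaum says nothing in that degree). Your induction-on-dimension idea via connectedness of $V(\ba) \cap V(p)$ also breaks down exactly where you say it does: when $\dim R/\ba = 2$ the section $V(\ba)\cap V(p)$ is one-dimensional and condition~(2) genuinely fails for $\ba+pR$, so the previous case cannot be invoked. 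Since every height-$(d-2)$ prime with $p\notin\fp$ lands in this situation, and by the paper's own reduction (Theorem~\ref{prop: reduce to dim 2}) these primes are precisely what must be handled, the gap is essential rather than a corner case.

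The paper's proof of (2)$\Rightarrow$(1) takes a completely different route, uniform in whether $p\in\ba$ or not. It invokes Peskine--Szpiro's Th\'eor\`eme~III.5.1 (restated as Theorem~\ref{PS theorem}), which says that under hypothesis~(2) the modules $H^{d-1}_\ba(R)$ and $H^d_\ba(R)$ vanish as soon as they are artinian. Hartshorne--Lichtenbaum gives $H^d_\ba(R)=0$ and $\Supp H^{d-1}_\ba(R)\subseteq\{\bm\}$, so artinianness reduces to finiteness of the socle of $H^{d-1}_\ba(R)$; that finiteness is supplied by Lyubeznik's theorem on Bass numbers of local cohomology over \emph{unramified} regular local rings of mixed characteristic. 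Thus the paper relies on a deep finiteness input your Bockstein approach was implicitly trying to sidestep; your approach would be more elementary if it worked, but as written it does not cover the generic case $p\notin\sqrt{\ba}$.
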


A special case of Theorem \ref{vanishing unramified}, when $\dim(R/\ba)\geq 3$ {\it and} $R/\ba$ is equidimensional, can be found in \cite{HNBPW_JA_2018}.

In prime characteristic $p>0$, we produce a new proof of the Second Vanishing Theorem using the action of Frobenius and some equivalent formulations of the Second Vanishing Theorem in characteristic $p>0$, which can be found in \S\ref{char p}.

As an application of Theorem \ref{vanishing unramified}, we extend our results in \cite{ZhangHighestLyubeznikNumbers} to local rings of mixed characteristic. Before stating our extension, we recall the definition of the Hochster-Huneke graph of a local ring. Let $A$ be a noetherian local ring. The \emph{Hochster-Huneke graph $\Gamma_A$ of $A$} is defined as follows. Its vertices are the top-dimensional minimal prime ideals of $A$, and two distinct
 vertices $P$ and $Q$ are joined by an edge if and only if $ht_B(P+Q)=1$. 

The main result in \S\ref{L number} is the following:
\begin{theorem}
\label{L numbers mixed char}
Let $(A,\bm,k)$ be a $d$-dimensional noetherian local ring. Assume that $A=R/I$ where $R$ is an $n$-dimensional unramified regular local ring $(R,\bm)$ of mixed characteristic $(0,p)$. Then $\dim_k(\Hom_R(k,H^d_{\bm}H^{n-d}_I(R)))$ is the number of connected components of the Hochster-Huneke graph $\Gamma_{\widetilde{A}}$ of $\widetilde{A}$, where $\widetilde{A}$ is the completion of the strict henselization of the completion of $A$.
\end{theorem}

All rings considered in this article are noetherian.

The paper is organized as follows: In \S\ref{section: vanishing unramified}, we prove Theorem \ref{vanishing unramified}. In \S\ref{section: reduction dim 2}, we reduce the Second Vanishing Theorem to prime ideals of dimension 2. In \S\ref{char p}, we consider some equivalent characterizations of the Second Vanishing Theorem in prime characteristic $p$. In \S\ref{L number}, we prove Theorem \ref{L numbers mixed char}. In our last section \S\ref{observation and question}, we present some observations along with some open questions.
\section{Proof of Theorem \ref{vanishing unramified}}
\label{section: vanishing unramified}

To prove Theorem \ref{vanishing unramified}, we need the following result from \cite{PeskineSzpiroDimensionProjective}.

\begin{theorem}[Th\'eor\`eme~III.5.1 in \cite{PeskineSzpiroDimensionProjective}]
\label{PS theorem}
Let $(R,\bm)$ be a $d$-dimensional complete regular local ring with a separably closed residue field and let $\ba$ be an ideal of $R$. Assume that $\Spec(R/\ba)\backslash\{\bm\}$ is connected and $\dim(R/\ba)\geq 2$. Then the following are equivalent.
\begin{enumerate}
\item $H^i_{\ba}(R)$ is artinian for all $i\geq d-1$;
\item $H^i_{\ba}(R)=0$  for all $i\geq d-1$.
\end{enumerate}
\end{theorem}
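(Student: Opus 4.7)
The direction $(2) \Rightarrow (1)$ is immediate. For $(1) \Rightarrow (2)$, I would first dispose of the top-degree case: by the Hartshorne-Lichtenbaum vanishing theorem, $H^d_\ba(R) = 0$ whenever $R$ is a complete regular local domain of dimension $d$ and $\dim(R/\ba) \geq 1$. Both conditions are granted (the second since $\dim(R/\ba)\geq 2$), so $H^d_\ba(R) = 0$ before any appeal to artinianness. What remains is to upgrade the artinianness of $H^{d-1}_\ba(R)$ to its vanishing.

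My plan is to pass to the Matlis dual. Since $R$ is complete and Gorenstein, combining local duality with $H^{d-1}_\ba(R) = \varinjlim_n \Ext^{d-1}_R(R/\ba^n, R)$ produces a natural isomorphism
\[ D\bigl(H^{d-1}_\ba(R)\bigr) \;\cong\; \varprojlim_n H^1_\bm(R/\ba^n), \]
where $D$ denotes Matlis duality. Under this identification artinianness translates into finite generation and vanishing into vanishing. Each term $H^1_\bm(R/\ba^n)$ sits in the standard four-term exact sequence
\[ 0 \to H^0_\bm(R/\ba^n) \to R/\ba^n \to \Gamma(U_n, \mathcal{O}) \to H^1_\bm(R/\ba^n) \to 0, \]
where $U_n$ is the punctured spectrum of $R/\ba^n$; all these spectra coincide topologically with the punctured spectrum of $R/\ba$, which is connected by hypothesis. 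The separably closed residue field ensures that no disconnection is introduced by passing to finite \etale covers.

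The decisive step is characteristic-$p$-specific. The Frobenius endomorphism of $R$ induces a natural Frobenius action on $H^{d-1}_\ba(R)$, and the subsystem $\{R/\ba^{[p^e]}\}$ is cofinal in $\{R/\ba^n\}$, so the inverse limit above is realized as an inverse system of Frobenius twists of $H^1_\bm(R/\ba)$. Applying the Hartshorne-Speiser-Lyubeznik structure theorem to the artinian module $H^{d-1}_\ba(R)$ with its Frobenius, I would decompose it as $M_{\mathrm{nil}} \oplus M_{\mathrm{ss}}$. The nilpotent summand $M_{\mathrm{nil}}$ dies in the inverse limit of Frobenius twists by construction, while the Frobenius-bijective summand $M_{\mathrm{ss}}$ is controlled by finite \etale covers of the punctured spectrum of $R/\ba$; the connectedness hypothesis together with the separably closed residue field forces the corresponding cover to be trivial, whence $M_{\mathrm{ss}} = 0$.

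The main obstacle, as expected, is this last step: transmuting the topological connectedness of the punctured spectrum of $R/\ba$ into the vanishing of $M_{\mathrm{ss}}$ via the \etale cover dictionary. This is the characteristic-$p$-specific ingredient that does not extend directly to mixed characteristic, and it is precisely what motivates the alternative strategy required for Theorem~\ref{vanishing unramified}.
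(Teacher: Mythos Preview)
The paper does not prove this statement; it is quoted from Peskine--Szpiro and invoked as a black box in the proof of Theorem~\ref{vanishing unramified}. So there is no in-paper argument to compare against directly, but there is a decisive mismatch between your proposal and the theorem as stated and used: the statement carries \emph{no characteristic hypothesis}, and in \S\ref{section: vanishing unramified} it is applied precisely to unramified regular local rings of \emph{mixed} characteristic. Your argument, by your own admission, becomes characteristic-$p$-specific at the ``decisive step.'' You have therefore not proved the theorem as stated; at best you have sketched the characteristic-$p$ special case, which the paper addresses separately (and by a different route) in \S\ref{char p}.

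For orientation, Peskine--Szpiro's own argument begins as you do --- Hartshorne--Lichtenbaum for degree $d$, Matlis duality turning artinianness of $H^{d-1}_\ba(R)$ into finite generation of $\varprojlim_n H^1_\bm(R/\ba^n)$, and the four-term sequences relating $R/\ba^n$ to sections over the punctured spectrum --- but then concludes via characteristic-free formal-scheme reasoning (finiteness of the ring of global sections on the punctured formal neighborhood, connectedness, and normality of $R$), with no Frobenius in sight. Even restricted to characteristic $p$, your sketch has a loose joint: Hartshorne--Speiser--Lyubeznik does not in general furnish a direct-sum decomposition $M_{\mathrm{nil}} \oplus M_{\mathrm{ss}}$ of an artinian $R[F]$-module, and the passage from connectedness of the punctured spectrum to $M_{\mathrm{ss}} = 0$ via ``\'etale covers'' is an assertion rather than an argument --- compare the explicit computation of $(H^1_\bm(A))_s$ in \S\ref{char p}, which first reduces to a two-dimensional domain and then invokes \cite[3.1]{HartshorneSpeiserLocalCohomologyInCharacteristicP} directly.
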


\begin{remark}
\label{cofiniteness}
There are many characterizations of artinianness; one such characterization ({\it cf.} \cite[Remark~1.3]{HunekeKohCofiniteness}) asserts that, if $(A,\bm,k)$ is complete local ring, then an $A$-module $M$ is artinian if and only if that $\Supp(M)=\{\bm\}$ and $M$ has finite dimensional socle, {\it i.e.} $\Hom_R(k,M)$ is a finite dimensional $k$-space.
\end{remark}

We now prove Theorem \ref{vanishing unramified}.
\begin{proof}[Proof of Theorem \ref{vanishing unramified}]
Since $H^j_{\ba}(M)\otimes_R\widetilde{R}\cong H^j_{\ba}(M\otimes_R\widetilde{R})$ by flat base change and $\widetilde{R}$ is faithfully flat over $R$, we may assume that $R$ is complete with a separably closed residue field by replacing $R$ with $\widetilde{R}$. 

(1)$\Rightarrow$(2). If $\dim(R/\ba)\leq 1$, then either $H^{d-1}_{\ba}(R)\neq 0$ (when $\dim(R/\ba)=1$) or $H^d_{\ba}(R)\neq 0$ (when $\dim(R/\ba)=0$). Hence $\dim(R/\ba)\geq 2$. If the punctured spectrum of $R/\ba$ were disconnected, then there would be two ideals $I,J$ of height at most $d-1$ such that $I\cap J=\ba$ and $\sqrt{I+J}=\bm$. The Mayer-Vietoris sequence says
\[0=H^{d-1}_{\ba}(R)\to H^d_{I+J}(R)=H^d_{\bm}(R)\to H^d_I(R)\oplus H^d_J(R).\]
Since $ht(I),ht(J)\leq d-1$, the Hartshorne-Lichtenbaum Vanishing Theorem (\cite[Theorem 3.1]{HartshorneCohomologicalDimension}) implies $H^d_i(R)=H^d_J(R)=0$ which would imply that $H^d_{\bm}(R)=0$, a contradiction. Hence the punctured spectrum of $R/\ba$ must be connected.

$(2)\Rightarrow(1)$. It was observed in \cite{HartshorneCohomologicalDimension} that $H^j_I(M)=0$ for all $j>t$ and all $R$-modules $M$ if and only if $H^j_I(R)=0$ for all $j>t$. Hence it suffices to show $H^{d-1}_{\ba}(R)=H^d_{\ba}(R)=0$ (since $H^{>d}_{\ba}(R)=0$ by Grothendieck Vanishing). Combining Theorem \ref{PS theorem} and Remark \ref{cofiniteness}, it suffices to show that both of $H^d_{\ba}(R)$ and $H^{d-1}_{\ba}(R)$ are supported only at the maximal ideal and have finite dimensional socle. It follows from the Hartshorne-Lichtenbaum vanishing theorem (\cite[Theorem 3.1]{HartshorneCohomologicalDimension}) that $H^d_{\ba}(R)=0$ and $\Supp(H^{d-1}_{\ba}(R))=\{\bm\}$. On the other hand, \cite[Theorem 1]{LyubeznikUnramifiedRegular} (or \cite[Theorem 1.2]{NunezBetancourtIJM2013}) shows that $H^{d-1}_{\ba}(R)$ has finite dimensional socle since $R$ is an unramified complete regular local ring. This finishes the proof.
\end{proof}

\begin{remark} 
\label{rmk: ramified case open}
The analogue of Theorem \ref{vanishing unramified} in the {\it ramified} case remains open in general. 

Our approach in the proof of Theorem \ref{vanishing unramified} provides a unified approach to the Second Vanishing Theorem: the same proof also works for equal-characteristic regular local rings. It follows from our proof of Theorem \ref{vanishing unramified} that: let $(R,\bm,k)$ be an $n$-dimensional {\it ramified} complete regular local ring of mixed characteristic $(0,p)$ and $\ba$ be an ideal such that $\Spec(R/\ba)\backslash\{\bm\}$ is connected and $\dim(R/\ba)\geq 2$, if $\dim_{k}\Hom_R(k,H^{n-1}_{\ba}(R))<\infty$, then $H^{n-1}_{\ba}(R)=0$. 

Note that $\dim_{k}\Hom_R(k,H^{n-1}_{\ba}(R))$ is one of the Bass numbers of the local cohomology module $H^{n-1}_{\ba}(R)$. One ought to remark that the finiteness of Bass numbers of local cohomology modules of a {\it ramified} regular local ring of mixed characteristic was first conjectured in \cite{HunekeProblemsLC} and has been a long standing open problem. This is one of the reasons we consider a reduction and some characterizations of the Second Vanishing Theorem in characteristic $p$ in the subsequent sections.

\S\ref{observation and question} contains some open questions which may be viewed as different approaches to the Second Vanishing Theorem in mixed characteristic.
\end{remark}

\section{Reduction to dimension 2}
\label{section: reduction dim 2}
In this section, we show that the Second Vanishing Theorem can be reduced to the case when the ideal $\ba$ is a prime ideal of dimension 2, {\it i.e.} $\dim(R/\ba)=2$, and discuss a related approach to proving the Second Vanishing Theorem in general.  

We begin with the following result which was implicitly contained in the proof of \cite[Theorem~2.9]{HunekeLyubeznikVanishing}. 
\begin{proposition}
\label{connected subgraph}
Let $(A,\bm)$ be a catenary noetherian local ring and $\ba$ be an ideal of $A$. Let $\{\bp_1,\dots,\bp_t\}$ be the set of minimal primes of $\ba$. Assume that $\Spec(A/\ba)\backslash\{\bm\}$ is connected. Then there exists $\bp_i$ such that $\Spec(A/\bb_i)\backslash\{\bm\}$ is connected where $\bb_i=\bigcap_{j\neq i}\bp_j$.
\end{proposition}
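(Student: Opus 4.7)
My plan is to reduce the statement to a standard combinatorial fact about connected graphs by introducing an incidence graph among the minimal primes of $\ba$. First I would define an auxiliary graph $G$ on the vertex set $\{\bp_1,\dots,\bp_t\}$, joining $\bp_i$ and $\bp_j$ by an edge precisely when $\bp_i+\bp_j$ is not $\bm$-primary, equivalently when $\dim(A/(\bp_i+\bp_j))\geq 1$. The key dictionary — in the only non-degenerate case, where all $\bp_i\neq\bm$ — is that $\Spec(A/\ba)\setminus\{\bm\}$ is connected if and only if $G$ is connected: the punctured spectrum decomposes as the union of the connected pieces $V(\bp_k)\setminus\{\bm\}$, and two such pieces meet exactly when the corresponding vertices are adjacent, since $V(\bp_i)\cap V(\bp_j)=V(\bp_i+\bp_j)$.

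Next, because the $\bp_j$'s are pairwise incomparable, the minimal primes of $\bb_i=\bigcap_{j\neq i}\bp_j$ are exactly $\{\bp_j\mid j\neq i\}$; therefore the incidence graph attached to $A/\bb_i$ is the vertex-deleted induced subgraph $G\setminus\{\bp_i\}$. Under this translation, the proposition becomes the elementary graph-theoretic fact that every finite connected graph on $t\geq 2$ vertices contains a non-cut vertex. To produce such a vertex, I would take a spanning tree $T$ of $G$: such a tree has at least two leaves, each leaf $\bp_i$ is a non-cut vertex of $T$ (removing a leaf of a tree leaves a smaller tree), and hence also a non-cut vertex of $G$, because $T\setminus\{\bp_i\}$ is a connected spanning subgraph of $G\setminus\{\bp_i\}$. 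Transporting this back, $\Spec(A/\bb_i)\setminus\{\bm\}$ is connected, which is exactly what we want.

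I do not expect any serious obstacle; the difficulty is bookkeeping rather than substance. The points that must be verified carefully are the identification of the minimal primes of $\bb_i$ (using the pairwise incomparability of the $\bp_j$) and the translation of topological adjacency of irreducible components of the punctured spectrum into the edge condition $\dim(A/(\bp_i+\bp_j))\geq 1$. The catenary hypothesis is not strictly needed for this dimension-based formulation, but it is convenient if one prefers to phrase edges in terms of heights, in the spirit of the Hochster-Huneke graph introduced in the introduction. The degenerate cases ($t=1$, or some $\bp_i$ equal to $\bm$) are either vacuous or reduce immediately to a smaller instance of the same assertion.
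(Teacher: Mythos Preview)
Your proposal is correct and follows essentially the same argument as the paper: define the incidence graph on the minimal primes with an edge whenever $\bp_i+\bp_j$ is not $\bm$-primary, identify topological connectedness of the punctured spectrum with graph connectedness, take a spanning tree, and remove a leaf. The paper's proof is terser but uses exactly this strategy, including the spanning-tree/leaf step; your additional remarks on the minimal primes of $\bb_i$ and the degenerate cases are sound bookkeeping.
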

\begin{proof}
The proof follows the same line of reasoning as in line 7-15 from the bottom on page 79 in \cite{HunekeLyubeznikVanishing}. Since \cite[Theorem~2.9]{HunekeLyubeznikVanishing} only treats rings of equal-characteristic, we opt to reproduce the proof here for the sake of completeness and clarity.

When $t=1$, there is nothing to prove. Assume that $t\geq 2$. Then $\dim(A/\bp_j)\geq 2$ for each $1\leq j\leq t$ (otherwise $\bp_j+\bigcap_{i\neq j}\bp_i$ would be $\bm$-primary and hence $\Spec(A/\ba)\backslash\{\bm\}$ would be disconnected). Consider the graph $\bG$ with vertices $1,\dots,t$ in which $i$ and $j$ are joined by an edge if $\bp_i+\bp_j$ is {\it not} $\bm$-primary. One can see that $\Spec(A/\ba)\backslash\{\bm\}$ is connected if and only if so is this graph $\bG$. Since $\bG$ is a finite connected graph, it admits a spanning tree by \cite[Corollary 5, p.~7]{BollobasBookGraphTheory}. Since the number of edges in a tree is less than the number of vertices, it follows from \cite[Corollary 7, p.~8]{BollobasBookGraphTheory} that this tree has a vertex from which only one edge emanates. The graph $\bG$ minus this particular vertex (and its only edge) is connected. Assume this particular vertex is $i$. Then, $\Spec(A/\bb_i)\backslash\{\bm\}$ is connected where $\bb_i=\bigcap_{j\neq i}\bp_j$.
\end{proof}

Our main result is the following:
\begin{theorem}
\label{prop: reduce to dim 2}
Let $(R,\bm,k)$ be a $d$-dimensional complete regular local ring with a separably closed residue field. If $H^d_{\fp}(R)=H^{d-1}_{\fp}(R)=0$ for all prime ideals $\fp$ with height $d-2$, then $H^d_{\ba}(R)=H^{d-1}_{\ba}(R)=0$ for all ideals $I$ such that $\Spec(R/\ba)\backslash \{\bm\}$ is connected and that $\dim(R/\ba)\geq 2$.
\end{theorem}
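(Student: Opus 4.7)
The plan is a double induction: an outer induction on $\dim(R/\ba)$, with base case $\dim(R/\ba)=2$, and, for each value of $\dim(R/\ba)$, an inner induction on the number $t$ of minimal primes of $\sqrt{\ba}$. Since $H^j_\ba(R)=H^j_{\sqrt{\ba}}(R)$, one may assume $\ba$ is radical throughout, and Hartshorne--Lichtenbaum gives $H^d_\ba(R)=0$ for free because $\dim(R/\ba)\geq 2$, so the real task is to force $H^{d-1}_\ba(R)=0$.

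\emph{Inner induction.} Write $\ba=\fp_1\cap\cdots\cap\fp_t$. As in the proof of Proposition \ref{connected subgraph}, connectedness of $\Spec(R/\ba)\setminus\{\bm\}$ forces $\dim(R/\fp_j)\geq 2$ for each $j$. If $t\geq 2$, Proposition \ref{connected subgraph} gives an index $i$ with $\bb_i:=\bigcap_{j\neq i}\fp_j$ having connected punctured spectrum, and the connectedness of the graph $\bG$ in that proof ensures $\bb_i+\fp_i$ is not $\bm$-primary, hence $H^d_{\bb_i+\fp_i}(R)=0$ by Hartshorne--Lichtenbaum. The Mayer--Vietoris sequence for $\ba=\bb_i\cap\fp_i$, combined with the inner inductive hypothesis (which gives $H^{d-1}_{\bb_i}=H^d_{\bb_i}=0$) and the vanishing $H^{d-1}_{\fp_i}=H^d_{\fp_i}=0$ for the single prime (to be established), forces $H^{d-1}_\ba=H^d_\ba=0$. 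This reduces the theorem to the case where $\ba=\fp$ is a prime with $\dim(R/\fp)\geq 2$.

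\emph{Outer induction, single prime case.} If $\dim(R/\fp)=2$, this is the hypothesis. If $\ht\fp=1$, then $\fp$ is principal (since $R$ is a UFD), and the cohomological dimension of a principal ideal is at most $1$, so $H^{d-1}_\fp=0$. When $3\leq\dim(R/\fp)\leq d-2$, choose a prime element $x$ of $R$ not contained in $\fp$ (some minimal generator of $\bm$ works, since $\bm\not\subseteq\fp$), and set $\bb=\fp+xR$. Then $\dim(R/\bb)=\dim(R/\fp)-1\geq 2$, and Faltings' connectedness theorem, applied to the complete local domain $R/\fp$ (of dimension $\geq 3$) cut by the single element $x$, yields that $\Spec(R/\bb)\setminus\{\bm\}$ is connected. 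The outer induction then gives $H^{d-1}_\bb(R)=H^d_\bb(R)=0$, and the Mayer--Vietoris sequence for $\fp$ and $(x)$, together with $H^{d-1}_{(x)}=H^d_{(x)}=0$, produces an isomorphism $H^{d-1}_\fp(R)\cong H^{d-1}_{\fp\cap xR}(R)$.

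\emph{The main obstacle.} The hardest step will be to show $H^{d-1}_{\fp\cap xR}(R)=0$. The ideal $\fp\cap xR$ has minimal primes $\fp$ and $xR$ and dimension $d-1$, and its punctured spectrum is connected (as the union of the connected sets $V(\fp)\setminus\{\bm\}$ and $V(xR)\setminus\{\bm\}$, which meet in the non-empty set $V(\bb)\setminus\{\bm\}$). Because $\dim(R/(\fp\cap xR))=d-1$ exceeds $\dim(R/\fp)$, the outer induction does not apply directly; I expect this to be resolved by running a further Mayer--Vietoris analysis on $\fp\cap xR$ in which the degree-$d$ terms of the long exact sequence (known to vanish by Hartshorne--Lichtenbaum and by the already-established vanishing for $\bb$) contribute additional constraints, preventing the tautological loop $H^{d-1}_\fp\cong H^{d-1}_{\fp\cap xR}\cong H^{d-1}_\fp$ from closing vacuously and instead forcing the module to be zero; alternatively, a cleverer choice of an auxiliary invariant finer than the pair $(\dim(R/\ba),t)$ may be needed to set up a single induction that bypasses this issue.
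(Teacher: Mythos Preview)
Your outline matches the paper's architecture almost exactly: reduce to radicals, peel off minimal primes via Proposition~\ref{connected subgraph} and Mayer--Vietoris, and for a single prime $\fp$ of dimension $\geq 3$ adjoin an element to drop the dimension and invoke Faltings' connectedness. The gap is precisely the one you flag as the ``main obstacle,'' and your proposed fixes (further Mayer--Vietoris on $\fp\cap xR$, or a finer induction invariant) will not close it: the Mayer--Vietoris for $\fp$ and $(x)$ genuinely only returns the tautology $H^{d-1}_\fp\cong H^{d-1}_{\fp\cap xR}$, and no amount of iterating it changes that.

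The missing idea is to abandon Mayer--Vietoris at this step and instead use the localization exact sequence
\[
\cdots \to H^{j}_{(\fp,x)}(R)\to H^{j}_{\fp}(R)\to H^{j}_{\fp}(R)_x\to H^{j+1}_{(\fp,x)}(R)\to\cdots
\]
together with the observation that $H^{d-1}_\fp(R)$ is supported only at $\bm$. The support claim is immediate from Hartshorne--Lichtenbaum applied after localizing at any non-maximal prime $\bq\supsetneq\fp$: one has $H^{d-1}_{\fp R_\bq}(R_\bq)=0$ because $\dim R_\bq\leq d-1$ and $\fp R_\bq$ is not $\bq R_\bq$-primary. Since $x\in\bm$, it follows that $H^{d-1}_\fp(R)_x=0$. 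Your outer induction already gives $H^{d-1}_{(\fp,x)}(R)=0$, so the sequence forces $H^{d-1}_\fp(R)=0$ directly, with no detour through $\fp\cap xR$. Note also that once you use this sequence there is no need for $x$ to be a prime element; any $x\in\bm\setminus\fp$ will do. This is exactly the route the paper takes.
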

\begin{proof}
We will use reverse induction on the height of $\ba$. Since the conclusion only depends on the radical of $\ba$, we may assume that $\ba$ is radical. First, assume $\hgt(\ba)=d-2$ and write $\ba=\cap^t_{i=1} \fp_i$. Since $\Spec(R/\ba)\backslash \{\bm\}$ is connected, each prime $\fp_i$ must have height $d-2$. The Hartshorne-Lichtenbaum Vanishing Theorem implies that $H^d_{\ba}(R)=0$ We will use induction on $t$ to show that $H^{d-1}_{\ba}(R)=0$. When $t=1$, this is precisely our assumption. Assume that $t\geq 2$. By Proposition \ref{connected subgraph} and renumbering the minimal primes if necessary, we may assume that $\Spec(R/\cap_{j\geq 2}\fp_j)\backslash\{\bm\}$ is connected. We consider the exact sequence 
\[\cdots \to H^{d-1}_{\fp_1}(R)\oplus H^{d-1}_{\cap ^t_{i=2}\fp_i}(R) \to H^{d-1}_{\ba}(R)\to H^d_{\fp_1+\cap ^t_{i=2}\fp_i}(R).\]
Since $\Spec(R/\ba)\backslash \{\bm\}$ is connected, $\fp_1+\cap ^t_{i=2}\fp_i$ is not $\bm$-primary; consequently it follows from Hartshorne-Lichtenbaum Vanishing Theorem that $H^d_{\fp_1+\cap ^t_{i=2}\fp_i}(R)=0$. The induction hypothesis asserts that $H^{d-1}_{\cap ^t_{i=2}\fp_i}(R)=0$. Therefore, $H^{d-1}_{\ba}(R)=0$.

Now assume that $\hgt(\ba)\leq d-3$ and write $\ba=\fp_1\cap\cdots\cap \fp_t$. We prove our result by induction on $t$. First, we treat the case when $t=1$ (that is, when $\ba$ is a prime ideal). In this case pick $r\in \bm\backslash \ba$, then by Faltings' connectedness theorem $\Spec(R/(\ba,r))\backslash \{\bm\}$ is connected since the number of generators of $(r)$ in $R/\ba$ is at most $\dim(R/\ba)-2$. As $\hgt((\ba,r))=\hgt(\ba)+1$, our induction hypothesis implies that $H^{d-1}_{(\ba,r)}(R)=H^{d}_{(\ba,r)}(R)=0$. Since $\Supp(H^{d-1}_{\ba}(R))\subseteq \{\bm\}$, we have $H^{d-1}_{\ba}(R)_r=0$. Now the exact sequence 
\[\cdots\to H^{j-1}_{(\ba,r)}(R) \to H^j_{\ba}(R) \to H^j_{\ba}(R)_r\to\cdots\] 
shows that $H^d_{\ba}(R)=H^{d-1}_{\ba}(R)=0$. Next, we assume that $t\geq 2$. Since $\Spec(A/\ba)\backslash\{\bm\}$ is connected, it follows from Proposition \ref{connected subgraph} that there exists $\bp_i$ such that $\Spec(A/(\cap_{j\neq i}\bp_j))\backslash\{\bm\}$ is connected. By rearranging the indices, we may assume that $i=t$. Set $I=\bp_1\cap\cdots\cap \bp_{t-1}$ and $J=\bp_t$. Then one can check that $\sqrt{I+J}\subsetneq \bm$ (since $\Spec(R/\ba)\backslash \{\bm\}$ is connected) and $I\cap J=\ba$. Then the induction hypothesis implies that $H^{d-1}_I(R)=0$; the case when $t=1$ implies that $H^{d-1}_J(R)=0$. Moreover, the Hartshorne-Lichtenbaum Vanishing Theorem implies that $H^d_{I+J}(R)=0$. It follows from the Mayer-Vietoris sequence 
\[0=H^{d-1}_I(R)\oplus H^{d-1}_J(R) \to H^{d-1}_{\ba}(R) \to H^d_{I+J}(R)=0\] 
that $H^d_{\ba}(R)=H^{d-1}_{\ba}(R)=0$. 
\end{proof}

\section{Second Vanishing Theorem in prime characteristic $p$, Revisited}
\label{char p}

In this section we consider the Second Vanishing Theorem in characteristic $p$ from a different perspective, including some equivalent characterizations.

We recall the following result due to Lyubeznik (\cite[Theorem~1.1]{LyubeznikVanishingLCCharp}).
\begin{theorem}
\label{theorem: vanishing of lc in char p}
Let $(R,\bm)$ be a regular local ring of dimension $n$ of prime characteristic $p$ and $A$ be a homomorphic image of $R$. Let $I$ be the kernel of $R\twoheadrightarrow A$. Then $H^{n-i}_I(R)=0$ if and only if the Frobenius action on $H^i_{\bm}(A)$ is nilpotent.
\end{theorem}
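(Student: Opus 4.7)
The strategy is to tie both conditions --- vanishing of $H^{n-i}_I(R)$ and nilpotence of the Frobenius action on $H^i_\bm(A)$ --- to the behavior of a single inverse system obtained by iterating the Frobenius on $H^i_\bm(A)$, and to use local duality to exchange the direct limit computing $H^{n-i}_I(R)$ with an inverse limit of local cohomologies.

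First, I would reduce to the complete case by passing from $R$ to $\widehat{R}$; this is harmless since the conclusion and the hypothesis are both preserved by the faithfully flat map $R \to \widehat R$. Next I would invoke Peskine--Szpiro's computation
\[
H^{n-i}_I(R) \;\cong\; \varinjlim_{e} \Ext^{n-i}_R\bigl(R/I^{[p^e]},\, R\bigr),
\]
with transitions induced by the canonical surjections $R/I^{[p^{e+1}]} \twoheadrightarrow R/I^{[p^e]}$. Local duality for the complete regular local ring $R$ then rewrites each term as
\[
\Ext^{n-i}_R\bigl(R/I^{[p^e]},\, R\bigr) \;\cong\; D\bigl(H^i_\bm(R/I^{[p^e]})\bigr),
\]
where $D = \Hom_R(-, E_R(R/\bm))$ is Matlis duality. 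Since each $H^i_\bm(R/I^{[p^e]})$ is artinian (local cohomology of a finitely generated module over a complete local ring), Matlis duality swaps $\varinjlim$ and $\varprojlim$, yielding
\[
H^{n-i}_I(R) \;\cong\; D\!\left(\varprojlim_e H^i_\bm(R/I^{[p^e]})\right).
\]

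Second, I would bring in Kunz's flatness of Frobenius on the regular ring $R$. Writing $R^{(e)}$ for $R$ regarded as an $R$-algebra via $F^e$, one has $R/I^{[p^e]} \cong R^{(e)} \otimes_R A$, and flat base change delivers $H^i_\bm(R/I^{[p^e]}) \cong R^{(e)} \otimes_R H^i_\bm(A)$. Tracing the surjections $R/I^{[p^{e+1}]} \twoheadrightarrow R/I^{[p^e]}$ through these identifications, one sees that the transition maps of the inverse system are obtained by base-changing the structure map $R^{(1)} \otimes_R H^i_\bm(A) \to H^i_\bm(A)$ that corresponds to the Frobenius action $f$ on $H^i_\bm(A)$.

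From this setup the equivalence reads off. If $f$ is nilpotent on $H^i_\bm(A)$, some iterate $f^k$ vanishes, so a sufficiently long composition of transition maps is zero, $\varprojlim_e H^i_\bm(R/I^{[p^e]})=0$, and hence $H^{n-i}_I(R)=0$. Conversely, if $f$ is not nilpotent, Hartshorne--Speiser--Lyubeznik stabilization on the artinian module $H^i_\bm(A)$ produces a nonzero ``stable'' part on which $f$ is surjective; using this surjectivity one lifts a nonzero element through the tower $\{R^{(e)} \otimes_R H^i_\bm(A)\}$ to obtain a nonzero coherent sequence, showing $\varprojlim_e H^i_\bm(R/I^{[p^e]}) \neq 0$ and hence $H^{n-i}_I(R) \neq 0$. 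The main obstacle is this converse direction: combining the HSL stabilization with the identification of the transition maps as base-changes of the Frobenius structure map, while keeping careful track of the twisted $R$-module structures at each level of the tower as one passes through Matlis duality.
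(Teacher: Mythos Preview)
The paper does not supply a proof of this theorem at all: it is quoted verbatim as Theorem~1.1 of \cite{LyubeznikVanishingLCCharp} and used as a black box throughout \S\ref{char p} and \S\ref{extensions to mixed char}. There is therefore nothing in the paper to compare your argument against.

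That said, your outline is essentially Lyubeznik's original proof recast without the language of $F$-modules. In his framework, $H^{n-i}_I(R)$ is the $F_R$-module generated by $\Ext^{n-i}_R(A,R)$ with generating morphism dual (under Matlis duality and Kunz flatness) to the Frobenius structure map on $H^i_\bm(A)$; vanishing of the $F_R$-module is then equivalent to nilpotence of that structure map, which is exactly your inverse-limit criterion. Your reduction to the complete case, the cofinality of Frobenius powers, local duality, and the HSL stable-part argument for the converse direction are all the standard ingredients. The one place to be careful, as you note, is in checking that the transition maps in the tower $\{F_R^e(H^i_\bm(A))\}$ really are the iterates $F_R^e(\beta)$ of the linearized Frobenius $\beta\colon F_R(H^i_\bm(A))\to H^i_\bm(A)$, and that a nonzero element of the stable part lifts to a nonzero coherent sequence (for this last step it is cleanest to argue on the dual direct system, where non-nilpotence of $f$ immediately gives a nonzero element surviving in the colimit). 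With those bookkeeping points handled, the argument is correct.
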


\begin{remark}
\label{algebraically closed residue field}
In the statement of the Second Vanishing Theorem, the ring $\widetilde{R}$ is complete and its residue field is separably closed. In prime characteristic $p$ (the setting of this section), $\widetilde{R}$ contains a coefficient field $k$ which is isomorphic to its residue field; consequently $\tilde{R}\cong k[[x_1,\dots,x_n]]$ where $n=\dim(\tilde{R})$. Let $k'$ be an algebraic closure of $k$ and set $R'$ to be $\widetilde{R}\otimes_k k'$. Since $k$ is separably closed, the field extension $k'/k$ is a directed union of finite purely inseparable extensions $k_e$ of $k$ for $e\geq 1$. Hence 
\[R'\cong \varinjlim_e(k[[x_1,\dots,x_n]]\otimes_kk_e)\cong \varinjlim_ek_e[[x_1,\dots,x_n]]\]
where the second isomorphism follows from the fact $k_e/k$ is a finite extension. Consequently, 
\begin{itemize}
\item $R'$ is a noetherian local ring with the maximal ideal $(x_1,\dots,x_n)$ ({\it cf.} \cite[Theorem~1]{OgomaDirectLimit});
\item $R'$ is henselian (as a direct limit of henselian local rings); and
\item the completion $\widehat{R'}$ of $R'$ is isomorphic to $k'[[x_1,\dots,x_n]]$.
\end{itemize}
The ring homomorphism $\widetilde{R}/\ba\widetilde{R}\to R'/\ba R'$ induces a homeomorphism on the spectra ({\it cf.} \cite[\href{https://stacks.math.columbia.edu/tag/0BR5}{Tag 0BR5}]{stacks-project}) since $k_e/k$ is purely inseparable. Therefore, the punctured spectrum of $\widetilde{R}/{\ba}\widetilde{R}$ is connected if and only if the same holds for $R'/\ba R'$. Furthermore, since $R'$ is a noetherian henselian local ring, it follows directly from \cite[\href{https://stacks.math.columbia.edu/tag/0C2C}{Tag 0C2C}]{stacks-project} that the punctured spectrum of $R'/\ba R'$ is connected if and only so is the one of $\widehat{R'}/\ba \widehat{R'}$. It is clear that passing from $\widetilde{R}$ to $\widehat{R'}$ does not affect vanishing of local cohomology. Hence when proving the Second Vanishing Theorem, one may assume that the residue field is algebraically closed.
\end{remark}

\begin{proposition}
\label{Serre implies finite length}
Let $(A,\bm)$ be a noetherian local ring which is a homomorphic image of a Gorenstein local ring. If $A$ satisfies Serre's condition $(S_i)$ for some $i<\dim(A)$, then $H^i_{\bm}(A)$ has finite length.
\end{proposition}
\begin{proof}
Let $(B,\bm)$ be a Gorenstein local ring with a surjection $B\to A$ with kernel $J$. Set $d=\dim(B)$. Local Duality asserts that 
\[H^i_{\bm}(A)\cong \Ext^{d-i}_B(A,B)^{\vee}\]
Hence it suffices to show that $\Ext^{d-i}_B(A,B)$ is supported in $\{\bm\}$. Let $P$ be a prime ideal of height $d-1$ in $B$ and consider $\Ext^{d-i}_{B_P}(A_P,B_P)$. If $P$ does not contain $J$, then $A_P=0$. We may assume that $P$ contains $J$ (and hence induces a prime ideal in $A$). Since the depth of $A_P$ is at least $i$ and $\dim(B_P)=d-1$, we have 
\[\Ext^{d-i}_{B_P}(A_P,B_P)^{\vee}\cong H^{(d-1)-d+i}_{PB_P}(A_P)=H^{i-1}_{PB_P}(A_P)=0.\] 
This finishes the proof.
\end{proof}

We are in position to present a proof of the Second Vanishing Theorem in characteristic $p$, using an approach different from the one in \cite{PeskineSzpiroDimensionProjective}.
\begin{proof}[An alternative proof of Second Vanishing Theorem in characteristic $p$]
Let $(R,\bm)$ be a noetherian regular local ring of prime characteristic $p$. Set $n=\dim(R)$ and $I$ be an ideal of height at most $n-2$. Combining Remark \ref{algebraically closed residue field} and Proposition \ref{prop: reduce to dim 2}, we may assume that $R$ is a complete noetherian regular local ring of prime characteristic $p$ with an algebraically closed residual field and $I$ is a height-$(d-2)$ prime ideal. 

The proof of the implication that the vanishing $H^n_I(R)=H^{n-1}_I(R)=0$ implies the connectedness of $\Spec(R/I)\backslash \{\bm\}$ is the same as in the proof of Theorem \ref{vanishing unramified} (the proof of this particular implication is characteristic-free). We will focus on the other implication.

Assume that $\Spec(R/I)\backslash \{\bm\}$ is connected and we wish to show $H^n_I(R)=H^{n-1}_I(R)=0$. The vanishing $H^n_I(R)=0$ follows from Hartshorne-Lichtenbaum vanishing. It remains to show $H^{n-1}_I(R)=0$. Set $A=R/I$. According to Theorem \ref{theorem: vanishing of lc in char p}, this is equivalent to the niloptence of the Frobenius action on $H^1_{\bm}(A)$. 

Since $A$ is a local integral domain and hence satisfies Serre's condition $(S_1)$, by Proposition \ref{Serre implies finite length}, $H^1_{\bm}(A)$ has finite length. Let $f$ denote the Frobenius action on $H^1_{\bm}(A)$. Since $H^1_{\bm}(A)$ has finite length, every element in $\bm H^1_{\bm}(A)$ is $f$-nilpotent. It follows that $H^1_{\bm}(A)$ is $f$-nilpotent if and only if $(H^1_{\bm}(A))_s:=\bigcap_{t}f^t(H^1_{\bm}(A))$ is 0. 

Set $U=\Spec(R/I)\backslash \{\bm\}$. Then \cite[3.1]{HartshorneSpeiserLocalCohomologyInCharacteristicP} asserts that there is an exact sequence
\[0\to H^0_{\bm}(A)_s\to A_s\to H^0(U,\mathcal{O}_U)_s\to H^1_{\bm}(A)_s\to 0.\]
Since $U$ is connected and $A$ is a domain (whose residue field is algebraically closed), the map in the middle $A_s\to H^0(U,\mathcal{O}_U)_s$ is the isomorphism $k\xrightarrow{\sim} k$ where $k$ is the residue field of $A$. Hence $H^1_{\bm}(A)_s=0$. This proves that $H^1_{\bm}(A)$ is $F$-nilpotent and hence $H^{n-1}_I(R)=0$. 
\end{proof}

Next we will consider some equivalent formulations of the Second Vanishing Theorems in characteristic $p$. To this end, we recall some basic facts regarding $S_2$-ification from \cite{HochsterHunekeIndecomposable}. 

\begin{remark}
Let $(A,\bm)$ be a complete local domain with a canonical module $\omega$, then 
\begin{enumerate}
\item $\Hom_A(\omega,\omega)$ is a commutative complete local ring and the natural map $A\to \Hom_A(\omega,\omega)$ is an injective module-finite ring homomorphism;
\item $\Hom_A(\omega,\omega)$ satisfies Serre's $(S_2)$-condition as both an $A$-module and as a ring on its own. 
\end{enumerate}
\end{remark}

\begin{theorem}
\label{vanishing S2}
The following statements are equivalent:
\begin{enumerate}
\item The Second Vanishing Theorem holds for regular local rings of characteristic $p$.
\item Let $(A,\bm)$ be a $2$-dimensional complete local domain of prime characteristic $p>0$ with a canonical module $\omega$. Assume $A/\bm$ is separably closed. For each element $\phi\in \Hom_A(\omega,\omega)$ there is an integer $e$ such that $\phi^{p^e}\in A$, {\it i.e.} there is an element $a\in A$ such that $\phi^{p^e}$ is the multiplication by $a$ on $\omega$.
\item Let $A$ be a 2-dimensional complete local domain of characteristic $p$ with a separably closed residue field. Then there exists a positive integer $n$ such that, for all systems of parameters $x,y$, one has $(x:y)^{[p^{n'}]}=(x^{p^{n'}})$ for all $n'\geq n$. 
\end{enumerate} 
\end{theorem}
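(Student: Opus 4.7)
The plan is to reformulate (2) and (3) as the single Frobenius-theoretic statement that, for every $2$-dimensional complete local domain $A$ with separably closed residue field, the Frobenius action on $H^1_{\bm}(A)$ is nilpotent, and then to deduce equivalence with (1) from the reduction to dimension two (Proposition~\ref{prop: reduce to dim 2}), the gonflement of Remark~\ref{algebraically closed residue field}, and Lyubeznik's Theorem~\ref{theorem: vanishing of lc in char p}.

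The central step is a Frobenius-equivariant identification $H^1_{\bm}(A) \cong A^*/A$, where $A^*:=\Hom_A(\omega,\omega)$ is the $S_2$-ification. Since $A$ is a $2$-dimensional complete local domain, $A$ satisfies $(S_1)$ while $A^*$ is a module-finite birational extension that is Cohen--Macaulay of dimension $2$, and $A^*/A$ has finite length; applying $H^\bullet_\bm$ to $0\to A\to A^*\to A^*/A\to 0$ therefore collapses to $A^*/A\xrightarrow{\sim} H^1_{\bm}(A)$. Viewing $A^*\subseteq\Frac(A)$ as a subring (hence closed under $p$-th powers), $A^*/A$ inherits a natural Frobenius, and I would verify via the Cech complex on any system of parameters $x,y$ (under which both sides become $(A_x\cap A_y)/A$) that this action matches the Frobenius on $H^1_{\bm}(A)$. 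I expect this Frobenius-equivariance check to be the main subtle point of the argument.

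For (1)$\Leftrightarrow$(2), Proposition~\ref{prop: reduce to dim 2} and Remark~\ref{algebraically closed residue field} reduce the Second Vanishing Theorem in characteristic $p$ to showing $H^{n-1}_{\fp}(R)=0$ for every height-$(n-2)$ prime $\fp$ in every complete regular local ring $R$ with algebraically closed residue field (the vanishing $H^n_{\fp}(R)=0$ is Hartshorne--Lichtenbaum). By Theorem~\ref{theorem: vanishing of lc in char p} this is equivalent to Frobenius nilpotence on $H^1_{\bm}(R/\fp)$; by Cohen's structure theorem every $2$-dimensional complete local domain with separably closed residue field arises as such an $R/\fp$; and by the identification above, Frobenius nilpotence on $H^1_{\bm}(A)$ is precisely the condition that each $\phi\in A^*$ satisfies $\phi^{p^e}\in A$ for some $e$, with uniformity of $e$ across $\phi$ following from the finite length of $A^*/A$. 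This is (2).

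For (2)$\Leftrightarrow$(3), I would use the concrete description $A^*=A_x\cap A_y$ inside $\Frac(A)$ for any system of parameters $x,y$: every element of $A^*$ has the form $a/x^n$ with $a\in(x^n:y^n)$, and the injective transitions $[a]_n\mapsto[ax]_{n+1}$ give $A^*/A=\bigcup_n(x^n:y^n)/(x^n)$. Under this filtration the Frobenius acts by $f([a]_n)=[a^p]_{np}$, so $f^N=0$ on $A^*/A$ is equivalent to $(x^n:y^n)^{[p^N]}\subseteq(x^{np^N})$ for every $n\geq 1$. Since $x^n,y^n$ is itself a system of parameters, this absorbs into the uniform statement $(x:y)^{[p^N]}\subseteq(x^{p^N})$ for every s.o.p.\ $x,y$, which is (3); conversely, (3) applied across all s.o.p.\ yields a uniform exponent killing $A^*/A$, hence (2).
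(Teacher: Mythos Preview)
Your proposal is correct and follows essentially the same route as the paper: both hinge on the Frobenius-equivariant identification $H^1_{\bm}(A)\cong S/A$ with $S=\Hom_A(\omega,\omega)$, combined with Theorem~\ref{theorem: vanishing of lc in char p} and the reduction of Proposition~\ref{prop: reduce to dim 2}. Two minor organizational differences are worth noting. First, the paper proves (1)$\Leftrightarrow$(2) and (1)$\Leftrightarrow$(3) separately, whereas you chain (1)$\Leftrightarrow$(2)$\Leftrightarrow$(3); the content is the same, and your observation that $x^n,y^n$ is again a system of parameters is exactly how the paper absorbs higher denominators into the quantified statement of (3). Second, you flag Frobenius-equivariance of $H^1_{\bm}(A)\cong A^*/A$ as the ``main subtle point'' to be checked via \v{C}ech; the paper sidesteps this by reading the isomorphism off the long exact sequence of $0\to A\to S\to C\to 0$, where $H^0_{\bm}(S)=H^1_{\bm}(S)=0$ since $S$ is $S_2$, and naturality of the connecting map under the ring endomorphism Frobenius makes equivariance automatic. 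Your explicit identification $A^*=A_x\cap A_y$ is a pleasant byproduct (it follows since $A\to A^*$ becomes an isomorphism after inverting $x$ or $y$, and $A^*$ is Cohen--Macaulay so $H^1_{\bm}(A^*)=0$), but it is not needed if one argues via the long exact sequence.
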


\begin{proof}
First we prove that $(1)\Rightarrow (2)$. Assume that the Second Vanishing Theorem holds in characteristic $p$. Let $(A,\bm)$ be as in $(2)$. Set $S:=\Hom_A(\omega,\omega)$. Consider the short exact sequence $0\to A\to S\to C\to 0$. (2) is equivalent to proving that the Frobenius on $C$ is nilpotent. 

The short exact sequence induces a long exact sequence on local cohomology 
\[0=H^0_{\bm}(S)\to H^0_{\bm}(C)\to H^1_{\bm}(A)\to H^1_{\bm}(S)=0\]
where $H^0_{\bm}(S)=H^1_{\bm}(S)=0$ since $S$ satisfies $(S_2)$-condition. Write $A=R/I$ where $R$ is an $n$-dimensional complete regular local ring. By the Second Vanishing Theorem, $H^{n-1}_I(R)=0$. By Theorem \ref{theorem: vanishing of lc in char p}, the Frobenius on $H^1_{\bm}(A)$ must be nilpotent. Hence so is the Frobenius on $H^0_{\bm}(C)$. We claim that $H^0_{\bm}(C)=C$ and we reason as follows.  It suffices to show that $C$ is supported in the maximal ideal only. Let $\fp$ be any non-maximal prime ideal. Since $A_{\fp}$ is Cohen-Macaulay (note that $\dim(A)=2$), we have $S_{\fp}\cong A_{\fp}$ and hence $C_{\fp}=0$. This shows that $C$ is supported in the maximal ideal only, thus $H^0_{\bm}(C)=C$. So, the Frobenius on $C$ is nilpotent and, equivalently, $(2)$ holds.

Next, we prove $(2)\Rightarrow (1)$. Assume now $(2)$ holds, and we wish to prove the Second Vanishing Theorem. To this end, let $(R,\bm)$ be an $n$-dimensional prime-characteristic complete regular local ring with a separably closed residue field. By Proposition \ref{prop: reduce to dim 2}, it suffices to prove that $H^{n-1}_P(R)=0$ for all prime ideals $P$ of height $n-2$. Set $A=R/P$. Then $A$ satisfies that hypotheses in $(2)$. By the argument in the previous paragraph, we see that the Frobenius on $H^1_{\bm}(A)$ is nilpotent. According to Theorem \ref{theorem: vanishing of lc in char p}, we have $H^{n-1}_P(R)=0$. This completes the proof of $(2)\Rightarrow (1)$ and hence $(1)\Leftrightarrow(2)$.

Next we prove that $(1)\Leftrightarrow (3)$. We have seen that $(1)$ is equivalent to $H^1_{\bm}(A)$ being $f$-nilpotent where $(A,\bm)$ is a 2-dimensional complete local domain of characteristic $p$ with a separably closed residue field. Given an arbitrary system of parameters $x,y$ in $A$, each element in $H^1_{\bm}(A)$ can be written as $[\frac{a}{x},\frac{b}{y}]$ such that $ay=bx$. Since $H^1_{\bm}(A)$ is artinian, it is $f$-nilpotent if and only if there is an integer $n$ such that $f^{n}(H^1_{\bm}(A))=0$ (and consequently $f^{n'}(H^1_{\bm}(A))=0$ for all $n'\geq n$). This holds if and only if $[\frac{a}{x},\frac{b}{y}]^{p^{n'}}=0$ for all $a,b,x,y$ such that $ay=bx$. Note that $[\frac{a}{x},\frac{b}{y}]^{p^{n'}}=0$ if and only if $a^{p^{n'}}\in (x^{p^{n'}})$ and $ay=bx$ if and only if $a\in (x:y)$. This completes the proof of $(1)\Leftrightarrow (3)$.
\end{proof}

\section{The highest Lyubeznik number of a local ring of mixed characteristic}
\label{L number}
In \cite[\S4]{LyubeznikFinitenessLC}, Lyubeznik introduced a set of integers attached to an equal-characteristic complete local ring which have been referred to as Lyubeznik numbers ever since. Because of the connections with topology of algebraic varieties ({\it cf.} \cite{GarciaSabbah, WaltherLyubeznikNumbers, BlickleBondu, ZhangHighestLyubeznikNumbers, ZhangLyubeznikNumbersProjectiveSchemes, SwitalaBLMS, ReicheltSaitoWalther}), studying Lyubeznik numbers bas become an active research area. In \cite{ZhangHighestLyubeznikNumbers}, the author proved a topological characterization of the highest Lyubeznik number for all local rings of equal-characteristic. The goal of this section is to extend the main theorem in \cite{ZhangHighestLyubeznikNumbers} to mixed characteristic, using our Theorem \ref{vanishing unramified}.

Let $(A,\bm,k)$ be a complete local ring of mixed characteristic. By Cohen's structure theorem, $A$ admits a surjection $\pi:R\twoheadrightarrow A$ from a complete unramified regular local ring $(R,\bm,k)$. Let $I$ be the kernel of $\pi$ and $n$ denote $\dim(R)$. We have the following.

\begin{proposition}
Let $A,R,I,n$ be as above. Then 
\[\dim_k\Hom_R(k,H^i_{\bm}H^{n-j}_I(R))\]
depends only on $A,i,j$, but not on the choices of $R$ or $\pi$.
\end{proposition}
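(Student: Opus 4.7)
The plan is to adapt the argument for independence of the equi-characteristic Lyubeznik numbers (due to Lyubeznik and reworked for the highest case in \cite{ZhangHighestLyubeznikNumbers}) to the unramified mixed characteristic setting. The heart of the matter is a comparison lemma: if $\phi\:\widetilde R\twoheadrightarrow R$ is a surjection of complete unramified regular local rings of mixed characteristic $(0,p)$ with common residue field $k$, whose kernel $\mathfrak{k}$ is generated by a regular sequence of length $m=\dim(\widetilde R)-\dim(R)$, then for any ideal $I\subset R$ with preimage $\widetilde I=\phi^{-1}(I)$,
\[
\dim_k \Hom_{\widetilde R}\bigl(k,\,H^i_{\bm_{\widetilde R}}(H^{\dim(\widetilde R)-j}_{\widetilde I}(\widetilde R))\bigr) \;=\; \dim_k \Hom_{R}\bigl(k,\,H^i_{\bm_{R}}(H^{\dim(R)-j}_{I}(R))\bigr).
\]

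First I would reduce the proposition to this comparison lemma. Given two Cohen presentations $\pi_s\:R_s\twoheadrightarrow A$ ($s=1,2$), write $R_s\cong V[[x^{(s)}_1,\dots,x^{(s)}_{n_s-1}]]$ where $V$ is the Cohen ring of $k$, and form the completed tensor product $\widetilde R=R_1\hat{\otimes}_V R_2=V[[x_1^{(1)},\dots,x_{n_1-1}^{(1)},x_1^{(2)},\dots,x_{n_2-1}^{(2)}]]$, itself a complete unramified regular local ring of dimension $n_1+n_2-1$. Choose lifts $\tau_k^{(s)}\in R_s$ of $\pi_{3-s}(x_k^{(3-s)})\in A$ under $\pi_s$, and define $\psi_s\:\widetilde R\twoheadrightarrow R_s$ sending $x_j^{(s)}\mapsto x_j^{(s)}$ and $x_k^{(3-s)}\mapsto \tau_k^{(s)}$. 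The compositions $\pi_1\circ\psi_1$ and $\pi_2\circ\psi_2$ agree as maps $\widetilde R\twoheadrightarrow A$ by construction, so both $\psi_s$ factor the same surjection $\widetilde R\twoheadrightarrow A$ with common kernel $\widetilde I\subset\widetilde R$. The kernel of each $\psi_s$ is generated by the sequence $(x_k^{(3-s)}-\tau_k^{(s)})_k$, which becomes a regular sequence after the change of coordinates $x_k^{(3-s)}\mapsto x_k^{(3-s)}+\tau_k^{(s)}$. Applying the comparison lemma to both $\psi_1$ and $\psi_2$ funnels the invariant computed over $R_1$ and the invariant computed over $R_2$ through the common value computed over $\widetilde R$, proving the proposition.

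For the comparison lemma itself, the approach is the collapsing Grothendieck spectral sequence
\[
E_2^{p,q}=H^p_{I\widetilde R}(H^q_{\mathfrak{k}}(\widetilde R))\;\Longrightarrow\;H^{p+q}_{\widetilde I}(\widetilde R),
\]
which (since $\mathfrak{k}$ is generated by a regular sequence of length $m$) collapses to
\[
H^{\dim(\widetilde R)-j}_{\widetilde I}(\widetilde R)\;\cong\;H^{\dim(R)-j}_{I\widetilde R}(H^m_{\mathfrak{k}}(\widetilde R)),
\]
combined with an analysis of $H^m_{\mathfrak{k}}(\widetilde R)$ through its $\widetilde R$- and (induced) $R$-module structures in order to transfer the socle computation over $\widetilde R$ to a socle computation over $R$. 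The key finiteness input in mixed characteristic, taking the place of the $\mathcal{D}$-module or $F$-module machinery available in equi-characteristic, is the finiteness of Bass numbers of $H^j_I(R)$ and of its iterates $H^i_{\bm_R}H^j_I(R)$ over unramified regular local rings of mixed characteristic, established in \cite{LyubeznikUnramifiedRegular} (and used already in the proof of Theorem \ref{vanishing unramified}). This finiteness guarantees that all the quantities in sight are finite-dimensional $k$-vector spaces and prevents any loss of information across the spectral sequence collapse.

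The main obstacle will be this final socle-matching step. The spectral sequence collapse itself is formal, but rigorously identifying $\dim_k\Hom_{\widetilde R}(k,\,H^i_{\bm_{\widetilde R}}(-))$ of the module $H^{\dim(R)-j}_{I\widetilde R}(H^m_{\mathfrak{k}}(\widetilde R))$ with $\dim_k\Hom_R(k,\,H^i_{\bm_R}(H^{\dim(R)-j}_I(R)))$ requires a careful understanding of the $\widetilde R$-action on $H^m_{\mathfrak{k}}(\widetilde R)$ and its interaction with local cohomology along $I\widetilde R$ and along $\bm_{\widetilde R}$. In equi-characteristic this compatibility is encoded in the functoriality of the $\mathcal{D}$-module (or unit $F$-module) structure under inverse images; in mixed characteristic, the analogous compatibility must be verified by direct module-theoretic means using the unramified Bass number finiteness as the substitute.
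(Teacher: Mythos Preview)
Your reduction via the fibre product $\widetilde R = R_1 \hat\otimes_V R_2$ is exactly the classical Lyubeznik reduction, and the paper follows the same route. Where you and the paper diverge is precisely at your ``main obstacle,'' the socle-matching step, and here the paper's treatment is both simpler and more complete.

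Rather than attacking the comparison for a length-$m$ regular sequence all at once via the spectral sequence, the paper reduces (as does Lyubeznik's original argument) to the case $m=1$: it suffices to show
\[
\dim_k\Hom_R\bigl(k,H^i_{\bm}H^{n-j}_I(R)\bigr)=\dim_k\Hom_{R[[x]]}\bigl(k,H^i_{(\bm,x)}H^{n+1-j}_{(I,x)}(R[[x]])\bigr).
\]
For this single-variable step the paper invokes the functor $G(M)=M\otimes_R H^1_{(x)}(R[[x]])$ studied by N\'u\~nez-Betancourt and Witt. Two facts about $G$, both proved in their paper by elementary module-theoretic means and valid in any characteristic, do all the work: $G$ carries $H^i_{\bm}H^{n-j}_I(R)$ to $H^i_{(\bm,x)}H^{n+1-j}_{(I,x)}(R[[x]])$, and $\Hom_R(k,M)\cong\Hom_{R[[x]]}(k,G(M))$ for every $R$-module $M$. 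This is exactly your missing socle comparison, already packaged and characteristic-free.

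Your belief that Bass-number finiteness from \cite{LyubeznikUnramifiedRegular} is the ``key finiteness input \dots\ taking the place of the $\mathcal{D}$-module or $F$-module machinery'' is a misconception. The paper's proof uses no such input: the $G$-functor argument is entirely elementary and does not require the socles to be finite-dimensional in order to identify them. Finiteness of Bass numbers plays no role here (and indeed the statement of the proposition makes perfect sense, and is proved, without knowing the dimensions are finite). Your spectral-sequence collapse is likewise indifferent to finiteness, so invoking it would not help with the step you flagged as the obstacle.
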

\begin{proof}
The proof follows the same line of ideas as in \cite[4.1]{LyubeznikFinitenessLC} and \cite[3.4]{NunezWittLNMixedChar}, hence we will provide a sketch only. 

Since each complete unramified regular local ring is a formal power series ring over a coefficient ring by Cohen's structure theorem, one can reduce the proof to proving the following: 
\begin{equation}
\label{reduction to one variable}
\dim_k\Hom_R(k,H^i_{\bm}H^{n-j}_I(R))=\dim_k\Hom_{R[[x]]}(k,H^i_{(\bm,x)}H^{n+1-j}_{(I,x)}(R[[x]]))
\end{equation}
where $x$ is an indeterminate over $R$. For each $R$-module $M$, define 
\[G(M):=M\otimes_RH^1_{(x)}(R[[x]]);\] 
$G$ was introduced in \cite[Proof of 4.3]{LyubeznikFinitenessLC} and further studied in \cite[\S 3]{NunezWittGenLN}. Then by \cite[3.10]{NunezWittGenLN}, $G(H^i_{\bm}H^{n-j}_I(R))=H^i_{(\bm,x)}H^{n+1-j}_{(I,x)}(R[[x]])$. Now (\ref{reduction to one variable}) follows from \cite[3.12]{NunezWittGenLN} which asserts that $\Hom_R(k,M)=\Hom_{R[[x]]}(k,G(M))$ for all $R$-modules $M$.
\end{proof}

\begin{remark}
By Cohen's Structure Theorem of complete local rings, if $A$ is a complete local ring of mixed characteristic, then $A$ admits a surjection $\pi:R\twoheadrightarrow \hat{A}$ from a complete unramified regular local ring $R$ (with kernel $I$).

If $A$ (not necessarily complete) itself admits a surjection $R'\twoheadrightarrow A$ from an $n'$-dimensional unramified regular local ring $(R',\bm',k)$ with kernel $I'$, then one can check that
\[\dim_k\Hom_R(k,H^i_{\bm}H^{n-j}_I(R))=\dim_k\Hom_{R'}(k,H^i_{\bm'}H^{n'-j}_{I'}(R')).\] 
\end{remark}

\begin{definition}
\label{definition of lambda}
Let $(A,\bm,k)$ be a noetherian local ring of mixed characteristic and let $\hat{A}$ denote its completion. Let $\pi:R\twoheadrightarrow \hat{A}$ be a surjection from an $n$-dimensional complete unramified regular local ring $(R,\bm,k)$ of mixed characteristic. Define
\[\lambda_{i,j}(A):=\dim_k\Hom_R(k,H^i_{\bm}H^{n-j}_I(R)).\]
\end{definition}

Note that, since $R$ is an unramified regular local ring of mixed characteristic, it follows from \cite{LyubeznikUnramifiedRegular} that $\lambda_{i,j}(A)$ are finite.

\begin{remark}
If $(A,\bm,k)$ is a noetherian local ring containing a field and admits presentation $A=R/I$ where $(R,\bm,k)$ is an $n$-dimensional regular local ring containing the same field, then it follows from \cite[1.4,~4.1]{LyubeznikFinitenessLC}) that 
\begin{equation}
\label{Hom Ext agree}
\dim_k\Hom_R(k,H^i_{\bm}H^{n-j}_I(R))=\dim_k\Ext^i_R(k,H^{n-j}_I(R)).
\end{equation}

However, when $A$ does not contain a field, (\ref{Hom Ext agree}) may no longer hold. 

Let $R=\bZ_2[[x_0,\dots,x_5]]$ and let $I$ be the ideal of $R$ generated by the $10$ monomials\footnote{These monomials come from a triangulation of the real projective plane; the interested reader may find related discussions in \cite{SinghWaltherBockstein}.}
\[
\{x_0x_1x_2, x_0x_1x_3, x_0x_2x_4, x_0x_3x_5, x_0x_4x_5, x_1x_2x_5, x_1x_3x_4, x_1x_4x_5, x_2x_3x_4, x_2x_3x_5\}.
\]
Let $\bm$ denote $(2,x_0,\dots,x_5)$ and set $k=R/\bm,A=R/I$. Then it is proved in \cite[5.5]{DattaSwitalaZhang} that 
\[H^4_I(R)\cong H^6_{(x_0,\dots,x_5)}(R/(2)).\]
Since $H^6_{(x_0,\dots,x_5)}(R/(2))$ admits an injective resolution (as an $R$-module)
\[0\to H^6_{(x_0,\dots,x_5)}(R/(2))\to E_R(R/\bm)\xrightarrow{\cdot 2}E_R(R/\bm)\to 0,\]
one can check that
\[\dim_{k}\Hom_R(k,H^1_{\bm}H^4_I(R))=0\neq 1=\dim_k\Ext^1_R(k,H^4_I(R)).\]

\end{remark}

Next, we focus on $\lambda_{d,d}(A)$ where $d=\dim(A)$ and prove our Theorem \ref{L numbers mixed char}, which extends our results in \cite{ZhangHighestLyubeznikNumbers} to local rings of mixed characteristic.

\begin{proof}[Proof of Theorem \ref{L numbers mixed char}]
Since both completion and strict henselization are faithfully flat, we may assume that both $A$ and $R$ are complete with separably closed residue fields. Assume that $\Gamma_1,\dots,\Gamma_t$ are the connected components of $\Gamma_A$. For $1 \leq j \leq t$, let $I_j$ be the intersection of the minimal primes of $A$ that are vertices of $\Gamma_j$. Similar to the proof of \cite[Proposition 2.1]{LyubeznikSomeLCInvariants}, using the Mayer-Vietoris sequence of local cohomology, one can prove that 
\[H^{n-d}_I(R)=\oplus_{j=1}^tH^{n-d}_{I_j}(R).\]
Hence 
\[\dim_k\Hom_R(k,H^d_{\bm}H^{n-d}_I(R))=\sum_{j=1}^t\dim_k\Hom_R(k,H^d_{\bm}H^{n-d}_{I_j}(R)).\]
We are reduced to proving that $\dim_k\Hom_R(k,H^d_{\bm}H^{n-d}_I(R))=1$ when $\Gamma_A=\Gamma_{R/I}$ is connected and $A$ is equidimensional. The rest of the proof follows the same strategy as in \cite{ZhangHighestLyubeznikNumbers}.

We will use induction on $\dim(A)$. 

First, assume that $\dim(A)=\dim(R/I)=2$. Since $\Gamma_A$ is connected and $A$ is equidimensional, $\Spec(A)\backslash \{\bm\}$ is also connected. Since $R$ is regular (and hence Gorenstein), it admits an injective resolution of the following form
\[0\to R\to \cdots \to \bigoplus_{I\subseteq \bp;\ \hgt(\bp)=j}E(R/\bp)\to \cdots \to  E(R/\bm) \to 0.\]
 Our Theorem \ref{vanishing unramified} asserts that $H^n_I(R)=H^{n-1}_I(R)=0$. Hence, after applying the functor $\Gamma_I$ to the given injective resolution of $R$, one obtain the following exact sequence:
\[0\to H^{n-2}_I(R) \to \bigoplus_{I\subseteq \bp;\ \hgt(\bp)=n-2}E(R/\bp) \to \bigoplus_{I\subseteq \bq;\ \hgt(\bq)=n-1}E(R/\bq) \to E(R/\bm) \to 0,\]
which is clearly an injective resolution of $H^{n-2}_I(R)$. Therefore, we have $H^2_{\bm}H^{n-2}_I(R)=E(R/\bm)$ and $H^j_{\bm}H^{n-2}_I(R)=0$ for $j\neq 2$. This proves that case when $\dim(A)=2$.

Assume now $\dim(A)\geq 3$. We will pick an element $r\in \bm$ as follows (same as in the proof of \cite[Theorem 1.4]{ZhangHighestLyubeznikNumbers}). If $\Supp(H^{n-d+1}_I(R))\neq \{\bm\}$, then by prime avoidance we pick $r$ that is not in any minimal prime of $I$ nor in any minimal element of $\Supp(H^{n-d+1}_I(R))$ (which has finitely many associated primes as shown in \cite{LyubeznikUnramifiedRegular}). If $\Supp(H^{n-d+1}_I(R))= \{\bm\}$, then we pick $r\in \bm$ that is not in any minimal prime of $I$. Then $\dim(R/I+(r))=\dim(A)-1$ and $R/I+(r)$ is also equidimensional. Our theorem now follows from the following statements:
\begin{enumerate}
\item $H^d_{\bm}H^{n-d}_I(R)\cong H^{d-1}_{\bm}H^{n-d+1}_{I+(r)}(R)$, and
\item $\Gamma_{R/\sqrt{I+(r)}}$ is connected.
\end{enumerate}
These two statements appeared as Proposition 2.1 and Proposition 2.2, respectively, in \cite{ZhangHighestLyubeznikNumbers}. The proofs of these two statements in \cite{ZhangHighestLyubeznikNumbers} do {\it not} require the ring to contain a field. This completes the proof our theorem. 
\end{proof}

\begin{corollary}
If a $d$-dimensional noetherian local ring $A$ satisfies the Serre's $(S_2)$ condition, then $\lambda_{d,d}(A)=1$. 
\end{corollary}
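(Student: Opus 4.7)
My plan is to apply Theorem \ref{L numbers mixed char} to translate the corollary into a statement about the Hochster--Huneke graph of $\widetilde{A}$, and then conclude using the classical fact that Serre's condition $(S_2)$ forces a complete local ring to be connected in codimension one.

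By Theorem \ref{L numbers mixed char}, $\lambda_{d,d}(A)$ equals the number of connected components of $\Gamma_{\widetilde{A}}$. Since $A$ is a $d$-dimensional noetherian local ring, $\widetilde{A}$ has at least one $d$-dimensional minimal prime, so $\Gamma_{\widetilde{A}}$ is nonempty. The corollary therefore reduces to showing that $\Gamma_{\widetilde{A}}$ is connected, i.e.\ that $\widetilde{A}$ is connected in codimension one.

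I would first verify that the $(S_2)$ property passes from $A$ to $\widetilde{A}$. The chain of faithfully flat local maps $A\to\hat{A}\to(\hat{A})^{sh}\to\widetilde{A}$ consists of a completion, a strict Henselization (a filtered colimit of local-\'etale maps, hence with geometrically regular fibers), and one more completion; each such map has regular or trivial closed fiber, which suffices to propagate the depth inequality defining $(S_2)$ along the tower. Thus $\widetilde{A}$ is $(S_2)$. Then I would invoke the classical connectedness result of Hartshorne: a complete local noetherian ring satisfying $(S_2)$ is equidimensional and its spectrum is connected in codimension one. Equidimensionality ensures that every minimal prime of $\widetilde{A}$ appears as a vertex of $\Gamma_{\widetilde{A}}$, and connectedness in codimension one says exactly that any two minimal primes $P,Q$ can be joined by a chain $P=P_0,P_1,\dots,P_\ell=Q$ of minimal primes with $\mathrm{ht}(P_i+P_{i+1})\le 1$ for each $i$, i.e.\ that $\Gamma_{\widetilde{A}}$ is connected. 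Combined with the reduction above, this yields $\lambda_{d,d}(A)=1$.

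The main obstacle I foresee is the transfer of $(S_2)$ to $\widetilde{A}$: in the excellent case it is routine, but for a general noetherian local $A$ it requires a careful argument along the tower of flat extensions, or a direct proof that the connectedness-in-codimension-one property itself descends/ascends between $A$ and $\widetilde{A}$. Once this step is handled, the rest is a clean combination of Theorem \ref{L numbers mixed char} with Hartshorne's connectedness theorem.
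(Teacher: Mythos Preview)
Your proposal is correct and follows essentially the same route as the paper: reduce via Theorem~\ref{L numbers mixed char} to the connectedness of $\Gamma_{\widetilde{A}}$, pass $(S_2)$ up to $\widetilde{A}$, and then invoke the Hartshorne/Hochster--Huneke fact that an $(S_2)$ complete local ring is equidimensional with connected Hochster--Huneke graph. The only point where the paper differs is bookkeeping: since Theorem~\ref{L numbers mixed char} (and the paper's definition of $\lambda_{i,j}$) is stated only for rings of mixed characteristic, the paper first disposes of the equicharacteristic case by citing the known result \cite[Theorem~4.6]{SurveyLNumbers}, and only then applies Theorem~\ref{L numbers mixed char}; you should add this case split rather than invoking Theorem~\ref{L numbers mixed char} for an arbitrary noetherian local ring.
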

\begin{proof}
When $A$ contains a field, this is known ({\it cf.} \cite[Theorem 4.6]{SurveyLNumbers}). Assume $A$ doesn't contain a field. According to our Theorem \ref{L numbers mixed char}, it suffices to show that the Hochster-Huneke graph $\Gamma_{\widetilde{A}}$ of $\widetilde{A}=\widehat{A^{sh}}$ is connected. Since $A$ is $S_2$, so is $\widetilde{A}$. Then \cite[Remark 2.4.1]{HartshorneCompleteIntersectionConnectedness} implies that $\widetilde{A}$ is equidimensional. Therefore, $\Gamma_{\widetilde{A}}$ must be connected by \cite[Theorem 3.6]{HochsterHunekeIndecomposable}.
\end{proof}

\begin{remark}
Let $(A,\bm,k)$ be a complete local ring of prime characteristic $p$. Then, by Cohen's Structure Theorem, one can write $A=R/I$ where $R=k[[x_1,\dots,x_n]]$ is a formal power series ring over $k$. Denote the maximal ideal of $R$ by $\bn$. One may consider 
\begin{equation}
\label{lambda char p}
\dim_k\Hom_R(k, H^i_{\bn}H^{n-j}_I(R))
\end{equation}
which agrees with the Lyubeznik number $\lambda_{i,j}(A)$ ({\it cf.} 
\cite[1.4,~4.1]{LyubeznikFinitenessLC}). On the other hand, $A$ can be also written as $R'/I'$ where $R'$ is a complete unramified regular local ring of mixed characteristic $(0,p)$. Denote the maximal ideal of $R'$ by $\bn'$ and $\dim(R')$ by $n'$. Following Definition \ref{definition of lambda}, one may consider 
\begin{equation}
\label{lambda mixed}
\dim_k\Hom_{R'}(k, H^i_{\bn'}H^{n'-j}_{I'}(R')).
\end{equation}

A natural question is that whether (\ref{lambda char p}) agrees with (\ref{lambda mixed}) (for fixed $i$ and $j$).

When $i=\dim(A)$ and $j=\dim(A)$, it follows immediately from our Theorem \ref{L numbers mixed char} and the main theorem in \cite{ZhangHighestLyubeznikNumbers} that the numbers (\ref{lambda mixed}) and (\ref{lambda char p}) coincide, both of which agree with the number of the connected components of the Hochster-Huneke graph of $A$. 
\end{remark}

\begin{remark}
The proof of Theorem \ref{L numbers mixed char} is an example of applying our Theorem \ref{vanishing unramified} to extend results, previously only known in equal-characteristic, to mixed characteristic. One may also apply Theorem \ref{vanishing unramified} to extend other results (for instance, some results in \cite{NunezBetancourtSpiroffWiit}) to mixed characteristic, which we will leave to another project.
\end{remark}

\section{Remarks and questions}
\label{observation and question}

In this section, we observe some natural mixed characteristic extensions of the links between vanishing of local cohomology and nilpotence of Frobenius action, and raise awareness of some open questions.

\subsection{Connections with Frobenius} Recall that Lyubeznik's vanishing theorem links the vanishing of $H^{n-i}_{\ba}(R)$ and the action of Frobenius on $H^{i}_{\bm}(R/\ba)$ where $(R,\bm)$ is a regular local ring of prime characteristic $p$ and $\ba$ is an ideal of $R$. In this section, we consider some (partial) extensions to mixed characteristic.

\begin{theorem}
\label{unramified extension}
Let $(R,\bm)$ be an $n$-dimensional unramified regular local ring of mixed characteristic $(0,p)$ and $\ba$ be an ideal of $R$. Assume that $p\in \ba$ (hence $R/\ba$ contains a field of characteristic $p$). If $H^i_{\bm}(R/\ba)$ is Frobenius nilpotent, then
\[H^{n-i}_{\ba}(R)=0.\]
\end{theorem} 
\begin{proof}
Set $\overline{R}=R/(p)$ and $\overline{\ba}=\ba/(p)$. Then $R/\ba=\overline{R}/\overline{\ba}$. Since $R$ is unramified, $\overline{R}$ is an $(n-1)$-dimensional regular local ring of characteristic $p$. By our assumption on $H^i_{\bm}(R/\ba)$ and Theorem \ref{theorem: vanishing of lc in char p}, we have $H^{(n-1)-i}_{\overline{\ba}}(\overline{R})=0$. The short exact sequence $0\to R\xrightarrow{\cdot p}R\to \overline{R}\to 0$ induces an exact sequence of local cohomology modules:
\[0=H^{(n-1)-i}_{\overline{\ba}}(\overline{R})=H^{(n-1)-i}_{\ba}(\overline{R})\to H^{n-i}_{\ba}(R)\xrightarrow{\cdot p}H^{n-i}_{\ba}(R).\]
Since $p\in \ba$, this forces $H^{n-i}_{\ba}(R)=0$.
\end{proof}

We ought to remark that Theorem \ref{unramified extension} only extends one implication in Theorem \ref{theorem: vanishing of lc in char p}. As to the other implication, we ask:
\begin{question}
\label{question: converse to extension to unramified}
Does the converse to Theorem \ref{unramified extension} hold?
\end{question}

When $i=1$, the answer is affirmative and it follows from the Second Vanishing Theorems in characteristic $p$ and in unramified mixed characteristic ({\it i.e.} our Theorem \ref{vanishing unramified}).

\begin{theorem}
Let $(R,\bm)$ be an $n$-dimensional unramified regular local ring of mixed characteristic $(0,p)$ and $\ba$ be an ideal of $R$. Assume that $p\in \ba$ (hence $R/\ba$ contains a field of characteristic $p$). Then $H^{n-1}_{\ba}(R)=0$ if and only if $H^1_{\bm}(R/\ba)$ is Frobenius nilpotent.
\end{theorem}
\begin{proof}
$\Leftarrow$ is a special case of Theorem \ref{unramified extension}; it remains to prove $\Rightarrow$. 

Set $\overline{R}=R/(p)$ and $\overline{\ba}=\ba/(p)$. Then $R/\ba=\overline{R}/\overline{\ba}$. Note that, since $R$ is regular, $H^n_{\ba}(R)=0$ by the Hartshorne-Lichtenbaum vanishing theorem. By Theorem \ref{vanishing unramified}, the punctured spectrum of $
\widetilde{R}\ba \widetilde{R}$ is connected ($\widetilde{R}$ is the completion of the strict henselization of the completion of $R$) and $\dim(R\ba)\geq 2$. Since $p\in \ba$, the same will hold for $\overline{R}$ and hence $H^{n-2}_{\overline{\ba}}(\overline{R})=H^{n-1}_{\overline{\ba}}(\overline{R})=0$ by the Second Vanishing Theorem in characteristic $p$. It now follows from Theorem \ref{theorem: vanishing of lc in char p} that $H^1_{\bm}(\overline{R}/\overline{\ba})=H^1_{\bm}(R/\ba)$ is Frobenius nilpotent.
\end{proof}

Question \ref{question: converse to extension to unramified} concerns a specific cohomological degree. It turns out that one can draw a weaker conclusion if one considers all vanishing above a specific cohomological degree.

\begin{proposition}
Let $(R,\bm)$ be an $n$-dimensional unramified regular local ring of mixed characteristic $(0,p)$ and $\ba$ be an ideal of $R$. Assume that $p\in \ba$ (hence $R/\ba$ contains a field of characteristic $p$). If $H^{n-j}_{\ba}(R)=0$ for all $j\leq t$ for a fixed integer $t$, then $H^{j-1}_{\bm}(R/\ba)$ is Frobenius nilpotent for all $j\leq t$.
\end{proposition}
\begin{proof}
Set $\overline{R}=R/(p)$ and $\overline{\ba}=\ba/(p)$. Then $R/\ba=\overline{R}/\overline{\ba}$. Consider the long exact sequence of local cohomology induced by $0\to R\xrightarrow{\cdot p}R\to \overline{R}\to 0$. Since $H^{n-j}_{\ba}(R)=0$ for all $j\leq t$, one has $H^{n-j}_{\overline{\ba}}(\overline{R})=H^{n-j}_{\ba}(\overline{R})=0$ for all $j\leq t$. Theorem \ref{theorem: vanishing of lc in char p} shows that 
\[H^{j-1}_{\bm}(R/\ba)=H^{j-1}_{\bm}(\overline{R}/\overline{\ba})=H^{(n-1)-(n-j)}_{\bm}(\overline{R}/\overline{\ba})\]
is Frobenius nilpotent for $j\leq t$.
\end{proof}

When $R$ is ramified, the situation seems to be much more mysterious since $R/(p)$ is no longer a regular ring. We are only able to obtain a weaker version of Theorem \ref{unramified extension} as follows.

\begin{theorem}
\label{extension to ramified}
Let $(R,\bm)$ be an $n$-dimensional {\bf ramified} regular local ring of mixed characteristic $(0,p)$ and $\ba$ be an ideal of $R$. Assume that $p\in \ba$ (hence $R/\ba$ contains a field of characteristic $p$). Assume that $H^j_{\bm}(R/\ba)$ is Frobenius nilpotent for $j\leq t$ for a fixed integer $t$, then 
\[H^{n+1-j}_{\ba}(R)=0\quad {\rm for\ }j\leq t.\]
\end{theorem}
\begin{proof}
Without loss of generality, we may assume that $R$ is complete. By Cohen's Structure Theorem, $R\cong V[[x_1,\dots,x_n]]/(p-f)$ where $f\in \bm^2$. Set $A=(V/pV)[[x_1,\dots,x_n]]$ (an $n$-dimensional regular local ring of characteristic $p$). We will denote the image of $f$ in $A$ by $f$ again. Then
\[R/(p)\cong A/(f).\] 
Set $\overline{R}=R/(p)$ and $\overline{\ba}=\ba/(p)$. We may view $\overline{\ba}$ as an ideal in $A/(f)$. Let $\bb$ be the ideal in $A$ such that $f\in \bb$ and $\bb/(f)=\overline{\ba}$. It is clear that
\[R/\ba\cong A/\bb,\]
and hence $H^j_{\bm}(A/\bb)$ is Frobenius nilpotent for $j\leq t$. Theorem \ref{theorem: vanishing of lc in char p} asserts that $H^{n-j}_{\bb}(A)=0$ for $j\leq t$. The exact sequence of local cohomology induced by $0\to A\xrightarrow{\cdot f}A\to A/(f)\to 0$ shows that
\[H^{n-j}_{\bb}(A/(f))=0\]
for $j\leq t$. (This is where we need to assume vanishing above a cohomological degree instead of vanishing at a single degree.) Consequently $H^{n-j}_{\ba}(R/(p))=H^{n-j}_{\overline{\ba}}(R/(p))=0$ for $j\leq t$.

Consider the exact sequence of local cohomology induced by the exact sequence $0\to R\xrightarrow{\cdot p}R\to \overline{R}\to 0$:
\[H^{n-j}_{\ba}(R/(p))\to H^{n+1-j}_{\ba}(R)\xrightarrow{\cdot p}H^{n+1-j}_{\ba}(R)\]
for $j\leq t$. Since $p\in \ba$, this forces $H^{n+1-j}_{\ba}(R)=0$ for $j\leq t$.
\end{proof}

Recall that the local cohomological dimension of an ideal $I$ in a noetherian ring $A$, denoted by $\lcd_A(I)$, is $\max\{j\mid H^j_I(A)\neq 0\}$.

For a noetherian local ring $(A,\bm)$ of prime characteristic $p$, its local cohomology modules $H^i_{\bm}(A)$ are equipped with an action of Frobenius $f:H^i_{\bm}(A)\to H^i_{\bm}(A)$ induced by the Frobenius endomorphism on $A$. The $F$-depth of $A$  is defined as 
\[\Fdepth(A):=\min\{j\mid H^j_{\bm}(A)\ {\rm is\ not\ nilpotent\ under\ }f\}.\]
The notion of $F$-depth of a local ring is introduced by Lyubeznik in \cite[Definition~4.1]{LyubeznikVanishingLCCharp} and is analogous to the notion of de Rham depth\footnote{An analogous notion for analytic subspaces of a complex manifold is introduced and investigated in \cite{RSW23}.} introduced by Ogus in \cite{OgusLocalCohomologicalDimension}.

We are in position to prove:
\begin{theorem}
\label{lcd in mixed char}
Let $(R,\bm)$ be an $n$-dimensional regular local ring of mixed characteristic $(0,p)$ and let $\ba$ be an ideal that contains $p$. Then
\begin{enumerate}
\item $\lcd_R(\ba)\leq n-\Fdepth(R/\ba)$, when $R$ is unramified; and
\item $\lcd_R(\ba)\leq n+1-\Fdepth(R/\ba)$, when $R$ is ramified.
\end{enumerate}
\end{theorem}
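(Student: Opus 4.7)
The plan is to prove (1) by bootstrapping Lyubeznik's characteristic-$p$ vanishing theorem via the hypothesis $p \in \ba$, and then to deduce (2) from (1) by writing a complete ramified regular local ring as a hypersurface in an unramified one.

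For (1), I would first observe that every element of $H^i_\ba(R)$ is killed by a power of $p$: since $p \in \ba$ implies $p^n \in \ba^n$, each $\Ext^i_R(R/\ba^n, R)$ is $p^n$-torsion, and taking the direct limit gives the claim. Hence to prove $H^i_\ba(R) = 0$ it suffices to show that multiplication by $p$ is injective on $H^i_\ba(R)$. The long exact sequence associated to $0 \to R \xrightarrow{p} R \to R/pR \to 0$ identifies the kernel of this multiplication with the image of $H^{i-1}_\ba(R/pR)$, so the task reduces to showing $H^{i-1}_\ba(R/pR) = 0$ whenever $i > n - \Fdepth(R/\ba)$. Now $\bar R := R/pR$ is a regular local ring of characteristic $p$ and dimension $n - 1$ (using unramifiedness), and $\bar R/\bar\ba = R/\ba$ with $\bar\ba := \ba\bar R$. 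Lyubeznik's vanishing theorem \cite[Theorem 1.1]{LyubeznikVanishingLCCharp} applied to $\bar R$ then yields $H^j_{\bar\ba}(\bar R) = 0$ for $j > (n-1) - \Fdepth(R/\ba)$, which is exactly the required range.

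For (2), I would reduce to (1) via a Cohen-structure argument. After replacing $R$ with its completion (which preserves both $\lcd$ and $\Fdepth$), Cohen's structure theorem provides a presentation $R \cong S/(f)$, where $S$ is a complete unramified regular local ring of dimension $n+1$ in mixed characteristic $(0,p)$ and $f \in \bm_S$ is a regular element, with $p$ still a regular parameter of $S$. Let $J \subseteq S$ be the preimage of $\ba$; then $p \in J$ because $p \in \ba$. Applying part (1) to the pair $(S, J)$ gives $\lcd_S(J) \leq (n+1) - \Fdepth(S/J) = (n+1) - \Fdepth(R/\ba)$, using $S/J \cong R/\ba$. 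Finally, the long exact sequence from $0 \to S \xrightarrow{f} S \to R \to 0$ shows that $H^i_J(R) = 0$ whenever both $H^i_J(S)$ and $H^{i+1}_J(S)$ vanish, so $\lcd_R(\ba) \leq \lcd_S(J)$, yielding the desired bound.

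The main obstacles I expect are bookkeeping rather than conceptual. One must verify that $\Fdepth$ is genuinely preserved along the chain $R/\ba \rightsquigarrow \widehat{R/\ba} \rightsquigarrow S/J$, and that the form of Lyubeznik's vanishing theorem invoked is the uniform statement $\lcd_A(I) \leq \dim(A) - \Fdepth(A/I)$ for an arbitrary ideal $I$ in a regular local ring $A$ of characteristic $p$. The heart of the argument — exploiting $p$-power torsion of $H^i_\ba(R)$ together with the Bockstein-type long exact sequence for multiplication by $p$ — is short, and the Cohen-structure reduction in the ramified case is standard.
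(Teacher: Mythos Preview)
Your proof is correct and, for part (1), coincides with the paper's argument (Theorem~\ref{unramified extension}): both use the long exact sequence from $0\to R\xrightarrow{p}R\to R/pR\to 0$ together with the fact that $p$ acts locally nilpotently on $H^i_\ba(R)$, reducing to Lyubeznik's theorem on the $(n-1)$-dimensional regular ring $R/pR$.

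For part (2) there is a mild but genuine difference in packaging. The paper (Theorem~\ref{extension to ramified}) passes directly to characteristic $p$: it writes $R\cong V[[x_1,\dots,x_n]]/(p-f)$, sets $A=(V/pV)[[x_1,\dots,x_n]]$, applies Lyubeznik's theorem to $A$, and then climbs back to $R$ via the two short exact sequences $0\to A\xrightarrow{f}A\to A/(f)\to 0$ and $0\to R\xrightarrow{p}R\to R/(p)\to 0$. You instead lift $R$ to the unramified $(n+1)$-dimensional ring $S=V[[x_1,\dots,x_n]]$ and invoke part (1) for $(S,J)$ as a black box, then descend via $0\to S\to S\to R\to 0$. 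Since $S/pS=A$ and your $\bar J$ equals the paper's $\bb$, the two arguments unwind to the same computation; your route has the modest advantage of exhibiting (2) as a formal corollary of (1) rather than re-running the characteristic-$p$ argument inside the ramified setting. The bookkeeping you flag (preservation of $\lcd$ and $\Fdepth$ under completion, and the identification $S/J\cong R/\ba$) is routine and causes no trouble.
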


\begin{proof}
Since $R$ is regular and $p\in \ba$, it follows from Theorem \ref{unramified extension} (unramified case), for each $t\leq \Fdepth(R/\ba)$,  
\[H^i_{\ba}(R)=0\ {\rm for\ }i\geq n-t\]
Therefore,
\[\lcd_R(\ba)\leq n-\Fdepth(R/\ba).\]

The ramified case follows similarly from Theorem \ref{extension to ramified}.
\end{proof}

\subsection{Some Open questions} In \cite[Theorem~2.5]{HunekeLyubeznikVanishing}, Huneke and Lyubeznik proved an `induction theorem' which enables them to provide a unified proof of the Second Vanishing Theorem in equal-characteristic. Their induction theorem is a refinement of a theorem due to Faltings \cite[Satz~1]{FaltingsVanishing}. 

Before we can recall the theorem due to Huneke-Lyubeznik, we need to reproduce some definitions. For a local ring $A$, set
\[\mdim(A):=\min\{\dim(A/Q)|\ Q\ {\rm is\ a\ minimal\ prime\ of\ }A\},\]
and
\[c(I):=\embdim(A)=\mdim(A/I)\]
for every ideal $I$ of $A$, where $\embdim(A)$ denotes the embedding dimension of $A$.

\begin{theorem}[Faltings]
\label{Faltings vanishing}
Let $A$ be a complete local ring containing its separably closed residue field. Let $I$ be an ideal of $A$ and let $n>c(I)$ be an integer and $M$ be a finitely generated $A$-module. Assume that, for every integer $s$ with $0<s<c(I)$ and for every prime ideal $\fp\subset A$ with $\dim(A/\fp)>s$, $H^q_{IA_{\fp}}(M_{\fp})=0$ for all $q\geq n-s$. Then
\[H^q_I(M)=0,\quad \forall q\geq n.\]
\end{theorem}

\begin{theorem}[Huneke-Lyubeznik]
\label{HL vanishing}
Let $(A,\bm)$ be a complete noetherian local ring containing its separably closed residue field. Let $I$ be an ideal of $A$ and let $n>c(I)$ be an integer and $M$ be a finitely generated $A$-module. Assume that, for every integer $s$ with $0<s<c(I)$ and for all $q\geq n-s$, the following hold
\begin{enumerate}
\item $H^q_{IA_{\fp}}(M_{\fp})=0$ for all $\fp\in\Spec(A)$ such that $I\subseteq \fp$ and $\dim(A/\fp)>s+1$
\item $H^q_{IA_{\fp}}(M_{\fp})=0$ for all $\fp\in\Spec(A)$ such that $I\subseteq \fp$, $\dim(A/\fp)=s+1$, and $\fp+\fq$ is $\bm$-primary for some minimal prime $\fq$ of $I$.
\end{enumerate}
Then
\[H^q_I(M)=0,\quad \forall q\geq n.\]
\end{theorem}

These two theorems have produced a family of results on local cohomological dimension, in equal-characteristic. More specifically, Theorem \ref{HL vanishing} implies that the Second Vanishing Theorem holds for all regular local rings of equal-characteristic.

\begin{question}
\label{extending induction thm to mixed}
Do Theorems \ref{Faltings vanishing} and \ref{HL vanishing} hold in mixed characteristic?
\end{question}

A positive answer to Question \ref{extending induction thm to mixed} will produce a new family of results on local cohomological dimension in mixed characteristic. More specifically, extending Theorem \ref{HL vanishing} to local rings of mixed characteristic is another approach to proving the Second Vanishing Theorem in mixed characteristic. To the best of our knowledge, Question \ref{extending induction thm to mixed} is wide open.

One may also consider extending the Second Vanishing Theorem to non-regular rings. In \cite{LyubeznikSurvey}, Lyubeznik asked the following.
\begin{question}[Lyubeznik]
\label{Lyubeznik question}
Let $(A,\bm)$ be a complete local domain of dimension $d$ whose residue field is separably closed. 
\begin{enumerate}
\item Find necessary and sufficient condition on $I$ under which $H^j_I(A)=0$ for all $j>d-2$. 
\item Let $I$ be a prime ideal. Assume that $I+\fp$ is not $\bm$-primary for every height-1 prime ideal $\fp$. Is it true that $H^j_I(A)=0$ for all $j>d-2$?
\end{enumerate}
\end{question}

As shown in \cite[7.7]{HochsterZhangContent}, Question \ref{Lyubeznik question}(b) has a negative answer even for complete intersections as stated. Hence additional assumptions may be needed. More specifically, we ask

\begin{question}
\begin{enumerate}
\item Let $R$ be an equal-characteristic regular local ring whose residue field is separably closed and let $G$ be a linearly reductive group acting on $R$. Does Question \ref{Lyubeznik question} have a positive answer for the invariant subring $R^G$?
\item Analogously, assume $R$ is a polynomial ring over a separably closed field and let $G$ be a linearly reductive group acting on $R$. Does Question \ref{Lyubeznik question} have a positive answer for homogeneous ideals in $R^G$?
\end{enumerate}
\end{question}

\subsection*{Acknowledgement} 

The author is grateful to the anonymous referees and Gennady Lyubeznik for their suggestions which greatly improve the exposition of this paper. The author also would like to thank Bhargav Bhatt and Gennady Lyubeznik for related conversations and Luis N\'{u}\~{n}ez-Betancourt for comments on a draft. 

\bibliographystyle{skalpha}
\bibliography{CommonBib}
\end{document}